\documentclass[11pt]{article}
\usepackage{amscd}
\usepackage{amsfonts}
\usepackage{amsmath}
\usepackage{amssymb}
\usepackage{amsthm}
\usepackage{bbm}
\usepackage{CJK}
\usepackage{fancyhdr}
\usepackage{graphicx}
\usepackage{indentfirst}
\usepackage{latexsym,bm}
\usepackage{mathrsfs}
\usepackage[arrow,matrix]{xy}
\usepackage[square, comma, sort&compress, numbers]{natbib}
\usepackage[colorlinks,linkcolor=red, anchorcolor=blue]{hyperref}
\allowdisplaybreaks[4]
\newtheorem{theorem}{Theorem}[section]
\newtheorem{lemma}[theorem]{Lemma}
\newtheorem{definition}[theorem]{Definition}
\newtheorem{proposition}[theorem]{Proposition}
\newtheorem{example}[theorem]{Example}

\newtheorem{corollary}[theorem]{Corollary}
\newtheorem{remark}[theorem]{Remark}

\usepackage[top=1in,bottom=1in,left=1.25in,right=1.25in]{geometry}
\def\<{\langle}
\def\>{\rangle}
\def\a{\alpha}
\def\b{\beta}

\def\B{\Box}
\def\c{\cdot}

\def\d{\delta}
\def\D{\Delta}

\def\e{\eta}

\def\lr{\longrightarrow}

\def\m{\mapsto}
\def\o{\otimes}
\def\om{\omega}

\def\r{\rho}

\def\ra{\rightarrow}

\def\si{\sigma}

\def\tr{\triangleright}
\def\tl{\triangleleft}

\def\v{\varepsilon}
\def\vp{\varphi}

\def\z{\zeta}

\date{}
\begin{document}
\renewcommand{\baselinestretch}{1.2}
\renewcommand{\arraystretch}{1.0}
\title{\bf BiHom-four-angle Hopf modules and BiHom-Yetter-Drinfel'd modules }
 \date{}
\author {{\bf Dongdong Yan$^{a}$  \quad Xiaoqian Liu $^{b}$\footnote {Corresponding author:  2021032@njxzc.edu.cn} }\\
{\small a: School of Mathematics and Physics, Nanjing Institute of Technology, Nanjing, }\\
{\small  Jiangsu 211167, P. R. of China}\\
{\small b: School of Information Engineering, Nanjing Xiaozhuang University, Nanjing }\\
{\small  Jiangsu 211171, P. R. of China}
} 
 \maketitle
\begin{center}
\begin{minipage}{12.cm}
\noindent{\bf Abstract.}
In this paper, we introduce the notion of four‑angle Hopf modules over a BiHom‑Hopf algebra $H$. We show that the category ${}_{H}^{H}\mathfrak{M}_{H}^{H}$ of BiHom‑four‑angle Hopf modules admits a strict monoidal category structure with respect to either the BiHom‑tensor product $\otimes_{H}$ or the BiHom‑cotensor product $\square_{H}$ as its monoidal product. We prove that the category $\mathcal{YD}_{H}^{H}(m,n,p,q)$ of BiHom‑$(m,n,p,q)$-Yetter‑Drinfel’d modules with paremeters $m,n,p,q\in\mathbb{Z}$ forms a strict braided monoidal category equipped with a new monoidal product. Furthermore, we establish monoidal equivalences between the monoidal categories $\mathcal{YD}_{H}^{H}(m,n,p,q)$ and ${}_{H}^{H}\mathfrak{M}_{H}^{H}$, where ${}_{H}^{H}\mathfrak{M}_{H}^{H}$ carries either $\otimes_{H}$ or $\square_{H}$ as its monoidal product. Finally, we construct braiding structures for the monoidal categories $\bigl({}_{H}^{H}\mathfrak{M}_{H}^{H},\otimes_{H}\bigr)$ and $\bigl({}_{H}^{H}\mathfrak{M}_{H}^{H},\square_{H}\bigr)$.
   \\

\noindent{\bf Keywords:} BiHom-Hopf algebra;  BiHom-four-angle Hopf module; BiHom-Yetter-Drinfel'd module; Braided monoidal category.
\\

 \noindent{\bf  Mathematics Subject Classification:} 16T05, 18M15, 17A30
 \end{minipage}
 \end{center}
 \normalsize\vskip1cm

\section*{Introduction}
Hopf modules constitute an important class of objects within Hopf algebra theory. A (right) Hopf module over a Hopf algebra $H$ is simultaneously a right $H$-module and a right $H$-comodule subject to a compatibility condition, which differs substantially from the compatibility condition defining a right‑right Yetter-Drinfel'd module. Its intrinsic nature can be best understood via the fundamental theorem of Hopf modules (see \cite{S69}). In \cite{W89}, Woronowicz redeveloped Hopf modules, together with their fundamental theorem, to investigate differential calculi over quantum groups. Schauenburg \cite{S94} further generalized this fundamental theorem to establish a monoidal equivalence between the category of two‑sided two‑cosided Hopf modules over $H$ and the category of Yetter-Drinfel'd modules over $H$. Moreover, this equivalence is monoidal when the category of Yetter-Drinfel'd modules is equipped with the tensor product over $\Bbbk$, while the category of two‑sided two‑cosided Hopf modules is endowed with either the tensor product or the cotensor product over $H$ (see \cite{D81}). For related generalizations, we refer the reader to \cite{ZWW19,GW20,H25,LYDW26}.

Hom‑algebras were first introduced by Makhlouf and Silvestrov in \cite{MS08}. Within this framework, ordinary associativity is replaced by Hom‑associativity, given by the relation $\alpha(a)bc=(ab)\alpha(c)$. Hom‑coassociativity for Hom‑coalgebras can be defined in an analogous fashion (see \cite{MS10}). The notions of Hom‑bialgebras and Hom‑Hopf algebras have likewise been formulated and systematically developed; see \cite{FK20, G10, LS14, MLC20,  MP14, MP15}. From the viewpoint of monoidal category theory, Caenepeel and Goyvaerts studied Hom‑structures in \cite{CG11}, where they introduced monoidal Hom‑algebras, monoidal Hom‑coalgebras, and related objects inside symmetric monoidal categories. These concepts differ slightly from the Hom‑algebras and Hom‑coalgebras mentioned above.

BiHom‑(co)algebras and BiHom‑bialgebras were investigated by Graziani et al. in \cite{GMMP15}, yielding a more general theoretical setting. More precisely, a BiHom‑bialgebra is a generalized bialgebra whose associativity and unit conditions are twisted by two automorphisms $\alpha$ and $\beta$, whereas its coassociativity and counit conditions are twisted by another pair of automorphisms $\varphi$ and $\psi$. It recovers the usual Hom‑bialgebra when $\alpha=\beta=\varphi=\psi$, and reduces to a monoidal Hom‑bialgebra under the condition $\alpha^{-1}=\beta^{-1}=\varphi=\psi$. Further investigations concerning BiHom‑type algebras may be found in \cite{FL18,GZW18,LMMP20,ZW20, ZWC24} and other references.

A natural question to ask is whether the main theorem in \cite{S94} still holds in the setting of BiHom-Hopf algebras.

This paper is organized as follows. In Section 1, we recall the definitions and properties of BiHom‑type structures as well as braided monoidal categories. Let $H$ be a BiHom‑Hopf algebra with bijective antipode. In Section 2, we introduce the definition of four‑angle Hopf modules over $H$, and prove that the category ${}_{H}^{H}\mathfrak{M}_{H}^{H}$ of BiHom‑four‑angle Hopf modules over $H$ admits two strict monoidal category structures. The monoidal product for objects $M,N\in{}_{H}^{H}\mathfrak{M}_{H}^{H}$ are given by $M\otimes_{H}N$ and $M\square_{H}N$, respectively. In Section 3, we review the notion of BiHom‑$(m,n,p,q)$-Yetter‑Drinfel’d modules over $H$ (see \cite{YL26}), where $m,n,p,q\in\mathbb{Z}$,  and verify that the category $\mathcal{YD}_{H}^{H}(m,n,p,q)$ of BiHom‑$(m,n,p,q)$-Yetter‑Drinfel’d modules is a strict braided monoidal category with a redefined monoidal product. In Section 4, we first establish a monoidal category equivalence between the monoidal category $\mathcal{YD}_{H}^{H}(m,n,p,q)$ and ${}_{H}^{H}\mathfrak{M}_{H}^{H}$, where the latter category is endowed with either $\otimes_{H}$ or $\square_{H}$ as its monoidal product. This result generalizes the main theorem in \cite{S94}. Finally, we construct the braiding structure for the monoidal categories $\bigl({}_{H}^{H}\mathfrak{M}_{H}^{H},\otimes_{H}\bigr)$ and $\bigl({}_{H}^{H}\mathfrak{M}_{H}^{H},\square_{H}\bigr)$.

Throughout this paper, all algebraic systems are over a field $\Bbbk$.  We denote the identity map by $\mathrm{id}$. We shall use the sigma notation in the versions of Sweedler
for $\D:\D(h)=h_{1}\o h_{2}$. In order to facilitate our computations, we always omit the summation symbol $\sum$.

\section{preliminaries}
\def\theequation{1.\arabic{equation}}
\setcounter{equation} {0}

A \emph{monoidal category} $\mathcal{C}=(\mathcal{C},\otimes,\mathcal{I},a,l,r)$ is a category $\mathcal{C}$ equipped with a tensor product functor $\otimes:\mathcal{C}\times\mathcal{C}\longrightarrow\mathcal{C}$, with a tensor unit object $\mathcal{I}\in\mathcal{C}$, with an associativity constraint isomorphism $a=a_{U,V,W}:(U\otimes V)\otimes W\longrightarrow U\otimes(V\otimes W)$ for any objects $U,V,W\in\mathcal{C}$, a left unit constraint $l=l_U:\mathcal{I}\otimes U\longrightarrow U$ and a right unit constraint $r=r_U:U\otimes\mathcal{I}\longrightarrow U$, for any object $U\in\mathcal{C}$, such that the pentagon axiom $a_{U,V,W\otimes X}\circ a_{U\otimes V,W,X}=(U\otimes a_{V,W,X})\circ a_{U,V\otimes W,X}\circ(a_{U,V,W}\otimes X)$ and the triangle axiom $(U\otimes l_V)\circ a_{U,\mathcal{I},V}=(r_U\otimes V)$ hold, for any objects $U,V,W,X\in\mathcal{C}$. A monoidal category $\mathcal{C}$ is \emph{strict} if all the constraints are identities. 

A \emph{braiding}\cite{K95} of a monoidal category $\mathcal{C}$ is a family of natural isomorphisms $c=c_{V,W}:V\otimes W\longrightarrow W\otimes V$ such that the following conditions hold
\begin{align*}
\begin{cases}
&c_{U,V\otimes W}=a_{V,W,U}^{-1}\circ(V\otimes c_{U,W})\circ a_{V,U,W}\circ(c_{U,V}\otimes W)\circ a_{U,V,W}^{-1},\\
&c_{U\otimes V,W}=a_{W,U,V}\circ(c_{U,W}\otimes V)\circ a_{U,W,V}^{-1}\circ(U\otimes c_{V,W})\circ a_{U,V,W},
\end{cases}
\end{align*}
for any $U,V,W\in\mathcal{C}$, where $a$ is the associativity constraint in the monoidal category $\mathcal{C}$.

Note that a braided monoidal category is a monoidal category $\mathcal{C}$ with a braiding.

In what follows, we will recall from \cite{GMMP15, ZWC24}  some information concerning BiHom-structures.

A \emph{unital BiHom-associative algebra $A$} is a 5-tuple $(A, \mu_A, 1_A, \alpha_A, \beta_A)$, in which $A$ is a linear space, $1_A \in A$ is an element (the unit), $\alpha_A, \beta_A: A \to A$ are linear isomorphisms, $\mu_A: A \otimes A \to A$ is a linear map with the notation $\mu_{A}(a \otimes b) = ab$, such that, for all $a,b,c \in A$:
\begin{align*}
\begin{cases}
&\alpha_A(1_A) = \beta_A(1_A) = 1_A,\quad a1_A = \alpha_A(a),\quad 1_Aa = \beta_A(a),\\
& \alpha_A(a)(bc) = (ab)\beta_A(c),\\
&\alpha_A \circ \beta_A = \beta_A \circ \alpha_A,\quad \alpha_A(ab) = \alpha_A(a)\alpha_A(b),\quad \beta_A(ab) = \beta_A(a)\beta_A(b).
\end{cases}
\end{align*}

In this paper, the algebras we mainly discussed are this kind of unital BiHom-associative algebras, and in the following we call them the \emph{BiHom-algebras}.

A $\emph{BiHom-algebra map}$ $f:(A, \mu_A, 1_A, \alpha_A, \beta_A)\lr (A', \mu_{A'}, 1_{A'}, \alpha_{A'}, \beta_{A'})$ is a map $f:A\lr A'$ such that $\a_{A'}\circ f=f\circ \a_{A}$, $\b_{A'}\circ f=f\circ \b_{A}$, $f(ab)=f(a)f(b)$ and $f(1_{A})=1_{A'}$,  for any $a,b\in A$.

A  \emph{counital BiHom-coassociative coalgebra $C$} is a 5-tuple $(C, \Delta_C, \varepsilon_C, \om_C, \psi_C)$, in which $C$ is a linear space, $\om_C, \psi_C: C \to C$ are linear isomorphisms, $\varepsilon_C: C \to \Bbbk$ and $\Delta_C: C \to C \otimes C$ are linear maps, such that, for all $c\in C$:
\begin{align*}
\begin{cases}
&\varepsilon_C(\om_C(c)) = \varepsilon_C(\psi_C(c)) = \varepsilon_C(c),\quad c_1\varepsilon_C(c_2) = \om_C(c),\quad \varepsilon_C(c_1)c_2 = \psi_C(c),\\
& \om_C(c_1) \otimes \Delta_C(c_2) = \Delta_C(c_1) \otimes \psi_C(c_2),\\
&\om_C \circ \psi_C = \psi_C \circ \om_C,\quad \Delta_C(\om_C(c)) = \om_C(c_1) \otimes \om_C(c_2),\quad \Delta_C(\psi_C(c)) = \psi_C(c_1) \otimes \psi_C(c_2).
\end{cases}
\end{align*}

Analogue to BiHom-algebras, \emph{BiHom-coalgebras} will be short for counital BiHom-coassociative coalgebra without any confusion.

A \emph{BiHom-coalgebras map $f:(C, \Delta_C, \varepsilon_C, \om_C, \psi_C)\lr (C', \Delta_{C'}, \varepsilon_{C'}, \om_{C'}, \psi_{C'})$} is a map $f:C\lr C'$ such that $\om_{C'}\circ f=f\circ \om_{C}$, $\psi_{C'}\circ f=f\circ \psi_{C}$, $\D_{C'}\circ f=(f\o f)\circ \D_{C}$ and $\varepsilon_{C'}\circ f=\varepsilon_{C}$.

A \emph{BiHom-bialgebra $H$} is a 9-tuple $(H,\mu,1,\Delta,\varepsilon,\a,\b,\om,\psi)$ with the property that $(H,\mu,1,\a,\b)$ is a BiHom-algebra, $(H,\Delta,\varepsilon,\om,\psi)$ is a BiHom-coalgebra, and $\Delta,\varepsilon$ are all morphisms of BiHom-algebras preserving unit, i.e., for all $h,g\in H$,
$$
\Delta(hg)=h_1g_1\otimes h_2g_2,\quad \v(hg)=\v(h)\v(g),\quad \Delta(1)=1\otimes 1,\quad \v(1)=1_\B.
$$
Moreover, $\a,\b$ are BiHom-coalgebra maps, $\om,\psi$ are BiHom-algebra maps, and they commute with each other.

A \emph{BiHom-Hopf algebras} \cite{ZWC24}  is a BiHom-bialgebra $H:=(H,\mu,1,\Delta,\varepsilon,\a,\b,\om,\psi)$ with a morphism (called the antipode)
$S\colon H\to H$ such that $S$ commutes with $\alpha,\beta,\om,\psi$, and satisfies, for any $h\in H$,
$$
h_1S(h_2)=S(h_1)h_2=\varepsilon(h)1_H.
$$

Note the definition of BiHom-Hopf algebras which is little different from \cite{GMMP15}, Def. 6.9.

\begin{proposition}\cite{ZWC24}
If $H$ is a BiHom-Hopf algebra, then

(1) the antipode $S$ satisfies 
\begin{align*}
&S(ab) = S\alpha^{-1}\beta(b)\,S\alpha\beta^{-1}(a),\quad S(1)=1,
\\&\Delta(S(a)) = S\om\psi^{-1}(a_2)\otimes S\om^{-1}\psi(a_1),\quad \varepsilon\circ S=\varepsilon,
\\&S\alpha^2\om^2=S\beta^2\psi^2;
\end{align*}

(2) if $S$ is a bijective map, then 
\begin{align*}
&\alpha^2\om^2=\beta^2\psi^2,\\
&S^{-1}(ab) = S^{-1}\alpha^{-1}\beta(b)S^{-1}\alpha\beta^{-1}(a),\quad S^{-1}(1_H)=1_H,\\
&\Delta(S^{-1}(a)) = S^{-1}\om\psi^{-1}(a_2)\otimes S^{-1}\om^{-1}\psi(a_1),\quad \varepsilon\circ S^{-1}=\varepsilon,\\
&S^{-1}\alpha^{-2}\beta^2(a_2)a_1 = a_2S^{-1}\alpha^2\beta^{-2}(a_1)=\varepsilon(a)1_H,\\
&S^{-1}\om^{2}\psi^{-2}(a_2)a_1 = a_2S^{-1}\om^{-2}\psi^2(a_1)=\varepsilon(a)1_H.
\end{align*}

\end{proposition}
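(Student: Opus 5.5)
The plan is to treat $S$ as a convolution inverse in a suitably twisted sense, so that each identity follows from a uniqueness argument: two maps that are one-sided inverses of the same map under a BiHom-twisted convolution must coincide. Throughout I would use BiHom-associativity and BiHom-coassociativity to move brackets, at the cost of powers of $\alpha,\beta,\om,\psi$, together with the facts that these four maps are mutually commuting bialgebra maps and that $S$ commutes with all of them.

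\textbf{Part (1).} The counit identity $\varepsilon\circ S=\varepsilon$ comes from applying $\varepsilon$ to $h_1S(h_2)=\varepsilon(h)1$ and using $\varepsilon(h_1)\varepsilon(h_2)=\varepsilon(h)$. For $S(1)=1$, I would use $\Delta(1)=1\otimes 1$ to get $1\cdot S(1)=1$, that is $\beta(S(1))=1$, and then apply $\beta^{-1}$, noting $\beta(1)=1$.

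For the anti-multiplicativity, I would consider three maps on $H\otimes H$:
\begin{align*}
F(a\otimes b)&=S(ab),\\
G(a\otimes b)&=S\alpha^{-1}\beta(b)\,S\alpha\beta^{-1}(a),\\
m(a\otimes b)&=ab.
\end{align*}
I would compute $(m * F)$ and $(G * m)$ in a twisted convolution on $\mathrm{Hom}(H\otimes H,H)$, checking that both equal $\varepsilon\otimes\varepsilon$ times $1$ up to the appropriate twists. The computation expands $a_1b_1\,S(a_2b_2)$ on one side and the expression coming from $G$ on the other. I would then evaluate the triple product $G*m*F$ in two ways, rebracketing with BiHom-associativity and absorbing units through $1x=\beta(x)$ and $x1=\alpha(x)$. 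The shifts $\alpha^{\pm1}\beta^{\mp1}$ in the formula are exactly what is needed to make the two bracketings agree. The comultiplication formula for $S$ is the dual argument, using BiHom-coassociativity and the twists $\om\psi^{-1}$, $\om^{-1}\psi$. For $S\alpha^2\om^2=S\beta^2\psi^2$, I would compute $h_1S(h_2)$ after rebracketing a threefold expression $(h_{1}S(h_{2}))h_3$ in two ways. BiHom-associativity introduces $\alpha,\beta$ and BiHom-coassociativity introduces $\om,\psi$, and comparing the two results gives $\alpha^2\om^2$ against $\beta^2\psi^2$ after applying $S$.

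\textbf{Part (2).} If $S$ is bijective, then $S$ can be cancelled from the last identity of (1), giving $\alpha^2\om^2=\beta^2\psi^2$. The multiplicativity and comultiplicativity formulas for $S^{-1}$ follow by applying $S^{-1}$ to the formulas of (1), with the arguments replaced by $S^{-1}$ of themselves, and then using that $S^{-1}$ commutes with the structure maps. The final antipode-type identities come from applying $S^{-1}$ to $S(h_1)h_2=\varepsilon(h)1$, where the element $h$ is replaced by a twisted $S^{-1}$-preimage. Using the anti-multiplicativity of $S^{-1}$ turns $S^{-1}(S(x)y)$ into $S^{-1}\alpha^{-1}\beta(y)\,\alpha\beta^{-1}(x)$, and the formula for $\Delta S^{-1}$ reverses the Sweedler legs. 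Choosing the replacement as $h\mapsto \alpha^{\pm2}\beta^{\mp2}$ or $\om^{\pm2}\psi^{\mp2}$ then produces exactly the four stated forms. Here I would use $\alpha^2\om^2=\beta^2\psi^2$ to convert between the $\alpha\beta$-twisted and $\om\psi$-twisted versions.

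\textbf{Main obstacle.} The hardest step is the anti-multiplicativity in (1). The difficulty is bookkeeping: choosing the right twisted convolution so that "left inverse equals right inverse" is valid in the BiHom setting, and tracking how $\alpha,\beta,\om,\psi$ accumulate when the triple products are rebracketed. I would first test the twists on the Hom case $\alpha=\beta$, $\om=\psi$ to fix the exponents before doing the general computation.
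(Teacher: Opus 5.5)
The paper gives no proof of this proposition; it is quoted from ZWC24. Much of your plan is sound:
\begin{itemize}
\item the unit and counit facts;
\item the transfer from $S$ to $S^{-1}$ in (2);
\item the four final identities. Apply $S^{-1}$ to $S(h_1)h_2=\varepsilon(h)1$ and to $h_1S(h_2)=\varepsilon(h)1$, use anti-multiplicativity of $S^{-1}$, and substitute $h=\alpha^{-1}\beta(a)$, resp.\ $h=\alpha\beta^{-1}(a)$. You need neither a preimage trick nor the formula for $\Delta S^{-1}$.
\end{itemize}

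\textbf{The gap is in the central step of (1).} Take the ordinary convolution $f*g=\mu\circ(f\otimes g)\circ\Delta$ on $\mathrm{Hom}(H\otimes H,H)$. Then $m*F=(\varepsilon\otimes\varepsilon)1$ and $G*m=(\varepsilon\otimes\varepsilon)1$ hold exactly, with no twists; this is what the shifts $\alpha^{\pm1}\beta^{\mp1}$ in $G$ achieve. The twists enter at the associativity step instead. Put $C=H\otimes H$, $\omega_C=\omega\otimes\omega$, $\psi_C=\psi\otimes\psi$. BiHom-(co)associativity gives $(\alpha\circ G\circ\omega_C)*(m*F)=(G*m)*(\beta\circ F\circ\psi_C)$. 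Evaluating both sides with the unit and counit axioms yields only
\[
\alpha^2\omega^2\bigl(S\alpha^{-1}\beta(b)\,S\alpha\beta^{-1}(a)\bigr)=\beta^2\psi^2\bigl(S(ab)\bigr),
\]
not $F=G$. So your claim that the shifts make the two bracketings agree is false. A residual mismatch $\alpha^2\omega^2$ versus $\beta^2\psi^2$ remains, and the same happens for the formula for $\Delta\circ S$. At that point your plan has nothing to remove it: that relation comes afterwards, and you obtain $\alpha^2\omega^2=\beta^2\psi^2$ only under bijectivity of $S$.

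\textbf{The missing idea:} this relation holds in every BiHom-Hopf algebra and must be proved first. Your threefold expression gives it, once you put $\beta\psi$ on the third leg so that BiHom-coassociativity applies:
\[
\beta^2\psi^2(h)=\bigl(h_{11}S(h_{12})\bigr)\beta\psi(h_2)=\alpha(h_{11})\bigl(S(h_{12})\psi(h_2)\bigr)=\alpha\omega(h_1)\bigl(S(h_{21})h_{22}\bigr)=\alpha^2\omega^2(h).
\]
This gives $\alpha^2\omega^2=\beta^2\psi^2$ with no hypothesis on $S$. The first line of (2) and the last line of (1) then become immediate, and cancelling $\beta^2\psi^2$ above gives both anti-(co)multiplicativity formulas.

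Alternatively, the weaker identity $S\alpha^2\omega^2=S\beta^2\psi^2$ already suffices. It comes from rebracketing $\alpha S\omega(h_1)\bigl(h_{21}S(h_{22})\bigr)$. Since $\alpha^2\omega^2$ is multiplicative and commutes with $S$, it gives $\alpha^2\omega^2G=\beta^2\psi^2G$. Either way, this identity must come before the uniqueness argument, not after it.
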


Let $H$ be a BiHom-bialgebra. A \emph{left $H$-module} is a 6-tuple $(M,\tr_M,\alpha_M,\beta_M,\om_M,\psi_M)$, in which $M$ is a linear space, $\alpha_M,\beta_M,\om_M,\psi_M\colon M\to M$ are linear isomorphisms, $\tr \colon H\otimes M\to M : h\otimes m\m h\tr m$ is linear map, such that, for any $h,g\in H$, $m\in M$,
\begin{align*}
\begin{cases}
&\alpha_M,\beta_M,\om_M,\psi_M \text{ commute with each other},\\
&\alpha(h)\tr \alpha_M(m)=\alpha_M(h\tr m),\quad \beta(h)\tr \beta_M(m)=\beta_M(h\tr m),\\
&\om(h)\tr \om_M(m)=\om_M(h\tr m),\quad \psi(h)\tr \psi_M(m)=\psi_M(h\tr m),\\
&\alpha(h)\tr (g\tr m)=(hg)\tr \beta_M(m),\quad 1_H\tr m=\beta_M(m).
\end{cases}
\end{align*}

Similarly, we can define the \emph{right $H$-module}.

A \emph{left $H$-module map} $f:(M,\tr_M,\alpha_M,\beta_M,\om_M,\psi_M)\lr (N,\tr_N,\alpha_N,\beta_N,\om_N,\psi_N)$ is a map $f:M\lr N$ such that $\alpha_N\circ f=f\circ \alpha_M, \beta_N\circ f=f\circ \beta_M, \om_N\circ f=f\circ \om_M, \psi_N\circ f=f\circ \psi_M$ 
and $f\circ \tr_M=\tr_N\circ (\mathrm{id}_H\o  f)$. Moreover, if $(M,\tr_M,\alpha_M,\beta_M,\om_M,\psi_M)$ is both a left $H$-module (via action $\tr$) and right $H$-module (via action $\tl$) and satisfies
\begin{align*}
\a_{H}(a)\tr (m\tl b)=(a\tr m)\tl \b_{H}(b),
\end{align*}
for all $a,b\in H, m\in M$, we call $M$ is an \emph{$H$-bimodule}.

Let $H$ be a BiHom-bialgebra. A \emph{right $H$-comodule} is a 6-tuple $(M, \r_M, \alpha_M,\beta_M,\om_M,\psi_M)$, in which  $M$ is a linear space, $\alpha_M,\beta_M,\om_M,\psi_M\colon M\to M$ are linear isomorphisms, $\r\colon M\to M\otimes H : m \m m_{(0)}\otimes m_{(1)}$ is linear map, such that,  for any $m\in M$,
\begin{align*}
\begin{cases}
&\om_M,\psi_M,\alpha_M,\beta_M \text{ commute with each other},\\
&(\alpha_M\otimes\alpha_{H})\circ\r=\r\circ\alpha_M,\quad (\beta_M\otimes\beta_{H})\circ\r=\r\circ\beta_M,\\
&(\om_M\otimes\om_{H})\circ\r=\r\circ\om_M,\quad (\psi_M\otimes\psi_{H})\circ\r=\r\circ\psi_M,\\
&\om_M(m_{(0)})\otimes m_{{(1)1}}\otimes m_{{(1)2}}=m_{(0)(0)}\otimes m_{(0)(1)}\otimes\psi_{H}(m_{(1)}),\quad m_{(0)}\varepsilon(m_{(1)})=\om_M(m).
\end{cases}
\end{align*}

Similarly, we can define the \emph{left $H$-comodule}.

A \emph{right $H$-comodule map} $f:(M,\r_M,\alpha_M,\beta_M,\om_M,\psi_M)\lr (N,\r_N,\alpha_N,\beta_N,\om_N,\psi_N)$ is a map $f:M\lr N$ such that 
$\alpha_N\circ f=f\circ\alpha_M, \beta_N\circ f=f\circ\beta_M,\om_N\circ f=f\circ\om_M,
\psi_N\circ f=f\circ\psi_M, \r_N\circ f=(f\otimes\mathrm{id}_H)\circ\r_M$.
Moreover, if $(M,\r_M,\alpha_M,\beta_M,\om_M,\psi_M)$ is both a left $H$-comodule (via coaction $\r^{l}_M(m)=m_{[-1]}\o m_{[0]}$) and right $H$-comodule (via action $\r^{r}_M(m)=m_{(0)}\o m_{(1)}$) and satisfies
\begin{align*}
&\om_{H}(m_{[-1]})\o m_{[0](0)}\o m_{[0](1)}=m_{(0)[-1]}\o m_{(0)[0]}\o \psi_{H}(m_{(1)}),
\end{align*}
for all $m\in M$, we call $M$ is a \emph{$H$-bicomodule}.

\section{BiHom-four-angle Hopf modules}
\def\theequation{2.\arabic{equation}}
\setcounter{equation} {0}
In this section, we introduce the concept of BiHom-four-angle Hopf modules and equip the category of BiHom-four-angle Hopf modules with two monoidal structure.
\begin{definition}
Let $H$ be a BiHom-bialgebra , $M$ a linear space and $\a_{M},\b_{M},\om_{M},\psi_{M}\in Aut(M)$. Then $M=(M,\a_{M},\b_{M},\om_{M},\psi_{M})$ is called a left-left BiHom-Hopf module if
\begin{enumerate}
\item[$(i)$] $(M, \c)$ is a left $H$-module;
\item[$(ii)$] $(M, \r)$ is a left $H$-comodule;
\item[$(iii)$] the following compatibility condition holds
\begin{align}\label{e2.1}
(h\c m)_{[-1]}\o (h\c m)_{[0]}=h_{1} m_{[-1]}\o h_{2}\c m_{[0]},
\end{align}
\end{enumerate}
for any $h\in H$ and $m\in M$.
\end{definition}
As above, we can also define the left-right, right-left, and right-right BiHom-Hopf modules as follows.
\begin{definition}\label{D2.3}
Let $H$ be a BiHom-bialgebra, $M$ a linear space and $\a_{M},\b_{M},\om_{M},\psi_{M}\in Aut(M)$. Then
\begin{enumerate}
 \item[$(1)$] $M$ is called a left-right BiHom-Hopf module if $M$ is both a left $H$-module and a right $H$-comodule such that the compatibility condition holds:
    \begin{align}\label{e2.2}
(h\c m)_{(0)}\o (h\c m)_{(1)}=h_{1}\c m_{(0)}\o h_{2}m_{(1)},
    \end{align}
    for any $h\in H$ and $m\in M$.
\item[$(2)$] $M$ is called a right-left BiHom-Hopf module if $M$ is both a right $H$-module and a left $H$-comodule such that the compatibility condition holds:
    \begin{align}\label{e2.3}
(m\c h)_{[-1]}\o (m\c h)_{[0]}= m_{[-1]}h_{1}\o m_{[0]}\c h_{2} ,
    \end{align}
    for any $h\in H$ and $m\in M$.
\item[$(3)$] $M$ is called a right-right BiHom-Hopf module if $M$ is both a right $H$-module and a right $H$-comodule such that the compatibility condition holds:
    \begin{align}\label{e2.4}
  (m\c h)_{(0)}\o (m\c h)_{(1)}=m_{(0)}\c h_{1}\o m_{(1)}h_{2},
    \end{align}
    for any $h\in H$ and $m\in M$.

\end{enumerate}
\end{definition}

Let $H$ be a BiHom-bialgebra. We denote the categories of BiHom-Hopf modules by $\!^{H}_{H}\mathfrak{M}$, $\!_{H}\mathfrak{M}^{H}$, $\!^{H}\mathfrak{M}_{H}$ and $\mathfrak{M}^{H}_{H}$. Let $\mathfrak{M}^{H}_{H}$ be the category whose objects are all right-right BiHom-Hopf modules over $H$; the morphisms in the category are morphisms of right $H$-modules and right $H$-comodules.
\begin{example}\label{E1}
Let $H$ be a BiHom-bialgebra, $V$ a linear space and $\a_{V},\b_{V},\om_{V},\psi_{V}\in Aut(V)$. Then 
\begin{enumerate}
\item[$(1)$] $H\o V:=(H\o V, \a\o \a_{V}, \b\o \b_{V}, \om\o \om_{V}, \psi\o \psi_{V})$ is a left-left BiHom-Hopf module with the following structures:
\begin{align*}
\begin{cases}
h\c(g\o v)=hg\o \b_{V}(v)
\\\r^{l}_{H\o V}(g\o v)=g_{1}\o g_{2}\o \psi_{V}(v)
\end{cases}
\end{align*}
for any $h,g\in H$ and $v\in V$.
\item[$(2)$] $H\o H:=(H\o H, \a\o \a, \b\o \b, \om\o \om, \psi\o \psi)$ is a left-left BiHom-Hopf module with the following structure:
    \begin{align*}
\begin{cases}
h\c(g\o k)=hg\o \b(k)
\\\r^{l}_{H\o H}(g\o k)=g_{1}\o g_{2}\o \psi(k)
\end{cases}
\end{align*}
for any $h,g,k\in H$.
\end{enumerate}
\end{example}
\begin{definition}\label{L2.4}
Let $M$ be a linear space and $\a_{M},\b_{M},\om_{M},\psi_{M}\in Aut(M)$. Then the following statements are equivalent:
\begin{enumerate}
\item[$(i)$] $M$ is an $H$-bimodule in $\!^{H}\mathfrak{M}$.
\item[$(ii)$] $M$ is a left $H$-comodule in $\!_{H}\mathfrak{M}_{H}$.
\item[$(iii)$] $M$ is an $H$-bimodule and a left $H$-comodule such that $M\in \!^{H}\mathfrak{M}_{H}$ and $M\in \!_{H}^{H}\mathfrak{M}$
\end{enumerate}
\end{definition}
We call $M$ a two-sided BiHom-Hopf module and denote by $\!_{H}^{H}\mathfrak{M}_{H}$ the category of these objects with morphisms are left and right linear and left colinear. In the same manner, we can define the category $\!_{H}\mathfrak{M}^{H}_{H}$.

Similarly, we can define the category $\!_{H}^{H}\mathfrak{M}^{H}$ (and $\!^{H}\mathfrak{M}^{H}_{H}$), whose objects are called two-cosided 
BiHom-Hopf modules.
\begin{definition}\label{L2.5}
Let $M$ be a linear space and $\a_{M},\b_{M},\om_{M},\psi_{M}\in Aut(M)$. Then the following statements are equivalent:
\begin{enumerate}
\item[$(i)$] $M$ is an $H$-bimodule in $\!^{H}\mathfrak{M}^{H}$.
\item[$(ii)$] $M$ is an $H$-bicomodule in $\!_{H}\mathfrak{M}_{H}$.
\item[$(iii)$] $M$ is an $H$-bimodule and an $H$-bicomodule such that $M\in \!_{H}^{H}\mathfrak{M}, \!_{H}\mathfrak{M}^{H},$ $\!^{H}\mathfrak{M}_{H}, \mathfrak{M}_{H}^{H}$.
\end{enumerate}
\end{definition}
We call $M$ a \emph{BiHom-four-angle Hopf module} and denote by $\!_{H}^{H}\mathfrak{M}_{H}^{H}$ the category of these objects  with morphisms are linear and colinear on both sides. Also, There are some examples of BiHom-four-angle Hopf module.
\begin{example}\label{E2.7}
Let $H$ be a BiHom-bialgebra.
\begin{enumerate}
\item[$(1)$] $H_{a}:=(H\o H, \a\o \a, \b\o \b, \om\o \om, \psi\o \psi)$ is an object in $\!_{H}^{H}\mathfrak{M}_{H}^{H}$ with the following structures:
\begin{align*}
\begin{cases}
h\c (x\o y)=\a^{-1}(h)x\o \b(y),\quad \r^{l}_{a}(x\o y)=x_{1}y_{1}\o x_{2}\o y_{2},
\\(x\o y)\c h=\a(x)\o y\b^{-1}(h),\quad \r^{r}_{a}(x\o y)=x_{1}\o y_{1}\o x_{2}y_{2},
\end{cases}
\end{align*}
for any $x, y, h\in H$.
\item[$(2)$] $H_{b}:=(H\o H, \a\o \a, \b\o \b, \om\o \om, \psi\o \psi)$ is an object in $\!_{H}^{H}\mathfrak{M}_{H}^{H}$ with the following structures:
\begin{align*}
\begin{cases}
h\c (x\o y)=h_{1}x\o h_{2}y,\quad \r^{l}_{b}(x\o y)=\om^{-1}(x_{1})\o x_{2}\o \psi(y),
\\(x\o y)\c h=xh_{1}\o yh_{2},\quad\r^{r}_{b}(x\o y)=\om(x)\o y_{1}\o \psi^{-1}(y_{2}),
\end{cases}
\end{align*}
for any $x, y, h\in H$.
\end{enumerate}
\end{example}

In what follows, we give two monoidal structures on the category of BiHom-four-angle Hopf modules. First, we introduce the first structure. Let $M,N\in \!_{H}^{H}\mathfrak{M}_{H}^{H}$. The BiHom-tensor product $M\o_{H}N$ of $M$ and $N$ is defined by
\begin{align*}
M\o_{H}N:=\{m\o n\in M\o N ~| ~m\c h\o \b_{N}(n)=\a_{M}(m)\o h\c n, \forall ~h\in H\}.
\end{align*}

\begin{proposition}\label{P2.9}
Let $H$ be a BiHom-bialgebra . Then $(\!^{H}_{H}\mathfrak{M}^{H}_{H},$ $\o_{H})$ is a strict monoidal category with the structures defined as follows. For any $h\in H$ and $m\o n\in M\o_{H}N$, 
\begin{align*}
h\c(m\o n)&=T^{-1}\a^{-1}(h)\c m\o \b_{N} (n),
\\ \r^{l}(m\o n)&= T(m_{[-1]}n_{[-1]})\o m_{[0]}\o n_{[0]},
\\(m\o n)\c h&=\a_{M} (m)\o n\c T^{-1}\b^{-1}(h),
\\ \r^{r}(m\o n)&=m_{(0)}\o n_{(0)}\o T(m_{(1)}n_{(1)}),
\end{align*}
where $T=\a^{a}\b^{b}\om^{c}\psi^{d}$ with $a,b,c,d\in\mathbb{Z}$.
\end{proposition}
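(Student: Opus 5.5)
The plan is to verify directly that the four formulas define an object of ${}_{H}^{H}\mathfrak{M}_{H}^{H}$ on $M\otimes_{H}N$. We then check that $\otimes_{H}$ is functorial, that $H$ itself serves as unit, and that the associativity and unit constraints can be taken to be identities. Throughout, the structure maps on $M\otimes_{H}N$ are $\a_M\o\a_N$, $\b_M\o\b_N$, $\om_M\o\om_N$ and $\psi_M\o\psi_N$. Since $T=\a^{a}\b^{b}\om^{c}\psi^{d}$ is a composite of maps that commute with each other and are both BiHom-algebra and BiHom-coalgebra maps, $T$ is itself a bialgebra automorphism commuting with $\a,\b,\om,\psi$. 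The first thing to record, therefore, is that $T$ can be pushed through products and coproducts freely: $T(xy)=T(x)T(y)$ and $\D T=(T\o T)\D$.

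\textbf{Step 1 (well-definedness).} We check that each structure map sends $M\otimes_{H}N$ into the appropriate target.
\begin{itemize}
\item The four structure maps preserve the defining subspace, because $\a_M,\dots$ intertwine the actions with $\a,\dots$.
\item The left action acts only on the $M$-factor, twisted by $\b_N$ on $N$. To see that it preserves $M\otimes_{H}N$, use the bimodule condition $\a(a)\c(m\c b)=(a\c m)\c\b(b)$ on $M$ to move the right $H$-action past the left one.
\item The right action is handled symmetrically.
\item For the coactions, we show that $\r^{l}(m\o n)\in H\o(M\otimes_{H}N)$. This is the delicate point, since $\r^l$ uses the product $m_{[-1]}n_{[-1]}$. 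Apply $\r^{l}_M\o\r^l_N$ to the relation $m\c h\o\b_N(n)=\a_M(m)\o h\c n$, and use the right-left compatibility (\ref{e2.3}) on $M$ and the left-left compatibility (\ref{e2.1}) on $N$. The two sides become $m_{[-1]}h_1\,\b(n_{[-1]})\o m_{[0]}\c h_2\o\b_N(n_{[0]})$ and $\a(m_{[-1]})\,(h_1n_{[-1]})\o\a_M(m_{[0]})\o h_2\c n_{[0]}$.
\item BiHom-associativity then lets us recognise the desired relation in the last two tensor factors. Concretely, we apply the relation to the element $m\o n$ and use BiHom-associativity $(xy)\b(z)=\a(x)(yz)$ to identify the $H$-factors.
\end{itemize}
The same argument works for $\r^r$.

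\textbf{Step 2 (axioms).} For $M\otimes_{H}N$ we must verify the following.
\begin{itemize}
\item The module axioms. Here $T^{-1}\a^{-1}$ is chosen so that $\a(h)\c(g\c x)=(hg)\c\b(x)$ holds: $T^{-1}\a^{-1}$ is multiplicative, and the extra $\b_N$'s match.
\item The comodule axioms for $\r^{l}$. These follow from the multiplicativity of $\D$ together with $\D T=(T\o T)\D$. Counitality uses $\v T=\v$ and $\v(xy)=\v(x)\v(y)$.
\item The four Hopf compatibilities (\ref{e2.1})--(\ref{e2.4}), the bimodule compatibility, and the bicomodule compatibility.
\end{itemize}
I expect this step, especially the compatibility between $\r^l$ and the left action, to be the main obstacle, purely through bookkeeping of the twisting maps. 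Computing $\r^l(h\c(m\o n))$ gives $T\big((T^{-1}\a^{-1}(h)_1m_{[-1]})\b(n_{[-1]})\big)\o\cdots$, which has to be rewritten as $h_1T(m_{[-1]}n_{[-1]})$. My first attempt will be to use the identity $(xy)\b(z)=\a(x)(yz)$ and the commutation of $T$ with $\a$ and $\b$. If the twists fail to match exactly, I would need to use the defining relation of $M\otimes_HN$ again to move $\b_N$ across. So the key is to carry out every computation on representatives satisfying the balancing condition.

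\textbf{Step 3 (monoidal structure).}
\begin{itemize}
\item \emph{Functoriality.} For morphisms $f,g$, the map $f\o g$ restricts to $M\otimes_HN\to M'\otimes_HN'$ and is a morphism, since $f$ and $g$ commute with all structures.
\item \emph{Unit.} We take $H$ as the unit object, with the regular actions and coactions given by multiplication and $\D$, twisted where needed. The unit isomorphisms will be the maps $H\otimes_HM\to M$, $h\o m\mapsto T^{-1}\a^{-1}(h)\c\b_M^{-1}(m)$ or similar, with inverses built from $1_H$. These can be identified with identities after the usual identification.
\item \emph{Associativity.} Inside $M\o N\o P$ we have $(M\otimes_HN)\otimes_HP=M\otimes_H(N\otimes_HP)$ as subspaces. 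The structure maps agree on both sides because $T$ and the $\a,\b$ twists are multiplicative. For example, $\r^l$ gives $T(T(m_{[-1]}n_{[-1]})p_{[-1]})$ versus $T(m_{[-1]}T(n_{[-1]}p_{[-1]}))$. At this point I would have to check that the intended bracketing convention makes these coincide, possibly by an appropriate reading of the BiHom-associativity; this is where strictness is decided.
\end{itemize}
Finally, the pentagon and triangle axioms are trivial, since the constraints are identities.
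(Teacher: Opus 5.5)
Your Steps 1--2 essentially reproduce the paper's proof, which does even less: it checks that the actions are well defined using $\alpha(a)\cdot(m\cdot b)=(a\cdot m)\cdot\beta(b)$, and it verifies (\ref{e2.1}). The latter closes directly, $T\big((T^{-1}\alpha^{-1}(h_1)m_{[-1]})\beta(n_{[-1]})\big)=T\big(T^{-1}(h_1)(m_{[-1]}n_{[-1]})\big)=h_1T(m_{[-1]}n_{[-1]})$, with no need to reuse the balancing relation. The paper never treats the unit or associativity.

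\textbf{The gap: Step 3.} It is false for general $T$ that $(M\otimes_HN)\otimes_HP$ and $M\otimes_H(N\otimes_HP)$ are the same space with the same structure maps. The unit axioms $1\cdot x=\beta(x)$ and $x\cdot 1=\alpha(x)$ force $\beta_{M\otimes_HN}=\beta_M\otimes\beta_N$ and $\alpha_{M\otimes_HN}=\alpha_M\otimes\alpha_N$. The two bracketings then give, respectively,
\begin{align*}
h\cdot(m\otimes n\otimes p)&=T^{-2}\alpha^{-2}(h)\cdot m\otimes\beta_N(n)\otimes\beta_P(p)\quad\text{and}\quad T^{-1}\alpha^{-1}(h)\cdot m\otimes\beta_N(n)\otimes\beta_P(p),\\
(m\otimes n\otimes p)\cdot h&=\alpha_M(m)\otimes\alpha_N(n)\otimes p\cdot T^{-1}\beta^{-1}(h)\quad\text{and}\quad \alpha_M(m)\otimes\alpha_N(n)\otimes p\cdot T^{-2}\beta^{-2}(h).
\end{align*}
The balancing relations differ too: in $M\otimes_H(N\otimes_HP)$ the first two factors are balanced by $m\cdot h\otimes\beta_N(n)\sim\alpha_M(m)\otimes T^{-1}\alpha^{-1}(h)\cdot n$.

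So the identity is an associativity constraint only when $T\alpha=\mathrm{id}=T\beta$ on $H$, i.e.\ $T=\alpha^{-1}=\beta^{-1}$, which forces $\alpha=\beta$. In that case your worry about $\rho^l$ disappears, since $T(xy)z=xT(yz)$ is then just BiHom-associativity.

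For general $T$ you can only get a non-strict monoidal structure. Put $T_M=\alpha_M^{a}\beta_M^{b}\omega_M^{c}\psi_M^{d}$ and $a_{M,N,P}(m\otimes n\otimes p)=T_M\alpha_M(m)\otimes n\otimes T_P^{-1}\beta_P^{-1}(p)$. This map respects the balancing relations, is linear and colinear on both sides, and satisfies the pentagon. For $\rho^l$ and $\rho^r$ the check is exactly $\alpha(x)(y\beta^{-1}(z))=(xy)z$. The left unit constraint is $H\otimes_HM\to M$, $h\otimes m\mapsto T(h)\cdot T_M(m)$, with inverse $m\mapsto 1_H\otimes T_M^{-1}\beta_M^{-1}(m)$. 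Your candidate $T^{-1}\alpha^{-1}(h)\cdot\beta_M^{-1}(m)$ respects the balancing relation only if $T\alpha=\beta$. Hence Step 3 cannot be completed as planned, and the ``strict'' claim holds only under this restriction on $T$.

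\textbf{A smaller point: what $\otimes_H$ means.} You read $M\otimes_HN$ literally as the displayed subspace. Your coaction argument in Step 1 shows only that $\rho^l$ sends $m\cdot h\otimes\beta_N(n)-\alpha_M(m)\otimes h\cdot n$ into $H\otimes(\text{relations})$. That means $\rho^l$ descends to the \emph{quotient} of $M\otimes N$ by these relations. It does not show the subspace is $\rho^l$-stable, since $h_1$ stays trapped inside $\alpha(m_{[-1]})(h_1n_{[-1]})$.

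The quotient reading is the right one, and it is what the paper actually uses. Its well-definedness check is a check on relations, and the proof of Theorem \ref{T1} moves elements across $\otimes_H$. The literal subspace cannot carry a unit. For $H=\Bbbk[x]$ with trivial twists, a balanced $F(s,t)\in\Bbbk[s]\otimes\Bbbk[t]=H\otimes H$ satisfies $sF=tF$, so $H\otimes_HH=0$. Likewise, your unit inverse ``built from $1_H$'' lands in $H\otimes_HM$ only in the quotient.
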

\begin{proof}
First, we check that the actions stated are well defined. For any $h,g\in H$ and $m\o_{H} n\in M\o_{H}N$, we have
\begin{align*}
h\c(m\c g\o_{H} \b_{N}(n))&=T^{-1}\a^{-1}(h)\c (m\c g)\o_{H} \b_{N}^{2} (n)
\\&=(T^{-1}\a^{-2}(h)\c m)\c\b( g)\o_{H} \b_{N}^{2} (n)
\\&=T^{-1}\a^{-1}(h)\c \a_{M}(m)\o_{H}\b( g)\c \b_{N}  (n)
\\&=h\c(\a_{M}(m)\o_{H} g\c n)
\end{align*}
Thus $h\c(m\o_{H} n)\in M\o_{H}N$. Similarly, we have $(m\o_{H} n)\c h\in M\o_{H}N$.
It is easy to show that $M\o_{H}N$ is both an $H$-bimodule and an $H$-bicomodule.

Finally, we only verify the compatibility conditions Eq. $(\ref{e2.1})$, and the rest, Eqs. $(\ref{e2.2})$, $(\ref{e2.3})$ and $(\ref{e2.4})$, are left to reader. For any $h,g\in H$ and $m\o n\in M\o_{H}N$, we have
\begin{align*}
\r^{l}(h\c(m\o n))&=\r^{l}(T^{-1}\a^{-1}(h)\c m\o \b_{N} (n))
\\&=T((T^{-1}\a^{-1}(h)\c m)_{[-1]}\b(n_{[-1]}))\o (T^{-1}\a^{-1}(h)\c m)_{[0]}\o \b_{N}(n_{[0]})
\\&=T((T^{-1}\a^{-1}(h_{1})m _{[-1]})\b(n_{[-1]}))\o T^{-1}\a^{-1}(h_{2})\c m _{[0]}\o \b_{N}(n_{[0]})
\\&=h_{1}T(m_{[-1]}n_{[-1]})\o h_{2}\c [m_{[0]}\o n_{[0]}]
\\&=h_{1}(m\o n)_{[-1]}\o h_{2}\c (m\o n)_{[0]}.
\end{align*}

This completes the proof.
\end{proof}

Now, we describe the second monoidal structure on this category, which can be viewed as a dual of the first one. Let Let $M,N\in \!_{H}^{H}\mathfrak{M}_{H}^{H}$. The BiHom-cotensor product $M\B_{H}N$ of $M$ and $N$ is defined by
\begin{align}\label{e2.5}
M\B_{H}N:=\{m\o n\in M\o N ~| ~\r^{r}(m)\o \psi_{N}(n)=\om_{M}(m)\o \r^{l}(n)\}.
\end{align}

As a dual of the result of Proposition \ref{P2.9}, we can get the following consequence.
\begin{proposition}\label{P1}
Let $H$ be a BiHom-bialgebra . Then $(~\!^{H}_{H}\mathfrak{M}^{H}_{H},$ $\B_{H})$ is a strict monoidal category with the following structures. For any $m\in M$ and $h, g\in H$,
\begin{align*}
&h\c(m\B n)=\hat{T}(h_{1})\c m\o \hat{T}(h_{2})\c n,&& \r^{l}(m\B n)= \hat{T}^{-1}\om^{-1}(m_{[-1]})\o m_{[0]}\o \psi_{N}(n),
\\&(m\B n)\c h=m\c \hat{T}(h_{1})\o n\c \hat{T}(h_{2}),&& \r^{r}(m\B n)= \om_{M}(m)\o n_{(0)}\o \hat{T}^{-1}\psi^{-1}(n_{(1)}).
\end{align*}
where $\hat{T}=\a^{c}\b^{d}\om^{a}\psi^{b}$ with $a,b,c,d\in\mathbb{Z}$.
\end{proposition}

\section{BiHom-Yetter-Drinfel'd modules}
\def\theequation{3.\arabic{equation}}
\setcounter{equation} {0}
In this section, we first recall the definition of Yetter-Drinfel'd modules over a BiHom‑bialgebra $H$. Then, we equip the category of BiHom‑$(m,n,p,q)$‑Yetter-Drinfel'd modules with a strict braided monoidal structure, where $m,n,p,q\in\mathbb{Z}$. 
\begin{definition} (see \cite{YL26})
Let $H$ be a BiHom-bialgebra, $V$ a linear space and $\a_{V},\b_{V},\om_{V},\psi_{V}\in Aut(V)$. Then $V$ is called a right-right BiHom-$(m,n,p,q)$-Yetter-Drinfel'd module over $H$ if 
\begin{enumerate}
\item[$(i)$] $(V, \tl)$ is a right $H$-module;
\item[$(ii)$] $(V,\r)$ is a right $H$-comodule;
\item[$(iii)$] the following compatibility condition holds
\begin{align}\label{e3.1}
[v\tl h_2]_{(0)}&\o \alpha^{m-2}\b^{n+1}\omega^{p-2}\psi^{q+1}(h_{1})[v\tl h_2]_{(1)}\notag 
\\&=v_{(0)}\tl \psi(h_{1})\o\b(v_{(1)})\alpha^{m-1}\b^{n}\omega^{p-1}\psi^{q}(h_{2}),
\end{align}
\end{enumerate}
for any $h\in H$, $v\in V$ and $m,n,p,q\in\mathbb{Z}$.
\end{definition}

Let $H$ be a BiHom-bialgebra. We denote by $\mathcal{YD}^{H}_{H}(m,n,p,q)$ the category whose objects are all right-right BiHom-$(m,n,p,q)$-Yetter-Drinfel'd modules over $H$; the morphisms in the category are morphisms of right $H$-modules and right $H$-comodules.

The categories ${}_H\mathcal{YD}^H(m,n,p,q)$, ${}^H\mathcal{YD}_H(m,n,p,q)$, and ${}^H_H\mathcal{YD}(m,n,p,q)$ of left–right, right–left, and left–left BiHom-$(m,n,p,q)$-Yetter–Drinfel'd modules, respectively, are defined by the compatibility conditions, for any $h\in H$ and $v\in V$,
\begin{align*}
(h_1\tr v)_{(0)}&\o (h_1\tr v)_{(1)}\alpha^{m+1}\b^{n-2}\omega^{p-2}\psi^{q+1}(h_{2})
\\&=\psi(h_{1})\tr v_{(0)}\o \alpha^{m}\b^{n-1}\omega^{p-1}\psi^{q}( h_{2})\a( v_{(1)}),
\\\alpha^{m-2}\b^{n+1}&\omega^{p+1}\psi^{q-2}(h_{1})[v\tl h_2]_{[-1]}\o[v\tl h_2]_{[0]}
\\&=\b(v_{[-1]})\alpha^{m-1}\b^{n}\omega^{p}\psi^{q-1}(h_{1})\o v_{[0]}\tl \om(h_{2}),
\\(h_1\tr v)_{[-1]}&\alpha^{m+1}\b^{n-2}\omega^{p+1}\psi^{q-2}(h_{2})\o (h_1\tr v)_{[0]}
\\&=\alpha^{m}\b^{n-1}\omega^{p}\psi^{q-1}(h_{1})\a(v_{[-1]})\o \om ( h_{2}) \tr v_{[0]}.
\end{align*}
\begin{example}\label{E3.3}
$(1)$
A BiHom-Hopf algebra $H$ with a bijective antipode $S$ can be considered as a right-right BiHom-$(m,n,p,q)$-Yetter-Drinfel'd module over itself with the comultiplication $\D$ as a right $H$-comodule, with the structure
\begin{align*}
x\leftharpoonup h=S \a^{m-3}\b^{n+1}\om^{p-3}\psi^{q}(h_{1}) [\b_H^{-1}(t)\a^{m-3}\b^{n}\om^{p-3}\psi^{q}(h_{2})],\quad \forall ~x, h\in H
\end{align*}
as a right $H$-module, and denote it by $H_{A}:=(H, \leftharpoonup, \D)$.

$(2)$
A BiHom-Hopf algebra $H$ with a bijective antipode $S$ can be considered as a right-right BiHom-$(m,n,p,q)$-Yetter-Drinfel'd module over itself with the multiplication $\mu$ as a right $H$-module, with the structure
\begin{align*}
\hat{\r}(h)=\psi^{-1}(h_{21})\o S\a^{m-3}\b^{n}\om^{p-3}\psi^{q+1}(h_{1})\a^{m-3}\b^{n}\om^{p-3}\psi^{q}(h_{22}),\quad \forall ~h\in H
\end{align*}
as a right $H$-comodule, and denote it by $H_{B}:=(H, \mu, \hat{\r})$.
\end{example}

In what follows, we show that the category $\mathcal{YD}^{H}_{H}(m,n,p,q)$ can be equipped with a strict braided monoidal structure.
\begin{lemma}\label{L3.3}
Let $V, W\in \mathcal{YD}^{H}_{H}(m,n,p,q)$. Then $V\o W\in \mathcal{YD}^{H}_{H}(m,n,p,q)$ with the structures defined as follows. For any $h\in H$, $v\in V$ and $w\in W$, 
\begin{align*}
(v\o w)\tl h&=v\tl \om^{-1}(h_{1})\o w\tl \psi^{-1}(h_{2}),
\\\r^{r}(v\o w)=&v_{(0)}\o n_{(0)}\o \a^{-1}(v_{(1)})\b^{-1} (w_{(1)}),
\end{align*}
\end{lemma}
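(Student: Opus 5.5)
The plan is a direct verification in three stages. The structure maps on $V\o W$ are taken to be $\a_V\o\a_W$, $\b_V\o\b_W$, $\om_V\o\om_W$, $\psi_V\o\psi_W$. The comodule formula is read as
\begin{align*}
\r^{r}(v\o w)=v_{(0)}\o w_{(0)}\o \a^{-1}(v_{(1)})\b^{-1}(w_{(1)}),
\end{align*}
with $n_{(0)}$ in the statement being a typo for $w_{(0)}$. Throughout, all four of $\a,\b,\om,\psi$ are used freely as both algebra and coalgebra maps that commute pairwise.

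\textbf{Step 1: right $H$-module.} Commutation of the structure maps with the action is immediate from the corresponding properties on $V$ and $W$. For the unit, $\D(1)=1\o1$ and $\om^{-1}(1)=\psi^{-1}(1)=1$ give $(v\o w)\tl 1=\b_V(v)\o\b_W(w)$. For the BiHom-associativity $(x\tl g)\tl\a(h)=\b_M(x)\tl(gh)$ (the right analogue), expand both sides via $\D(gh)=g_1h_1\o g_2h_2$. Reduce to the module axioms on $V$ and $W$ using $\om^{-1}\a=\a\om^{-1}$ and $\psi^{-1}\a=\a\psi^{-1}$.

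\textbf{Step 2: right $H$-comodule.} Counitality: $\v(\a^{-1}(v_{(1)})\b^{-1}(w_{(1)}))=\v(v_{(1)})\v(w_{(1)})$ gives $\om_V(v)\o\om_W(w)$. BiHom-coassociativity: apply $\D$ to $\a^{-1}(v_{(1)})\b^{-1}(w_{(1)})$ multiplicatively. Then use the coassociativity of $V$ and $W$ in the form $\om_V(v_{(0)})\o v_{(1)1}\o v_{(1)2}=v_{(0)(0)}\o v_{(0)(1)}\o\psi(v_{(1)})$, together with $\psi\a^{-1}=\a^{-1}\psi$ and $\psi\b^{-1}=\b^{-1}\psi$, to match
\begin{align*}
(v\o w)_{(0)(0)}\o(v\o w)_{(0)(1)}\o\psi\bigl((v\o w)_{(1)}\bigr).
\end{align*}

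\textbf{Step 3: compatibility (\ref{e3.1}).} This is the main obstacle. Start from the left side
\begin{align*}
[(v\o w)\tl h_2]_{(0)}\o \a^{m-2}\b^{n+1}\om^{p-2}\psi^{q+1}(h_1)\,[(v\o w)\tl h_2]_{(1)}.
\end{align*}
Expanding it produces $v\tl\om^{-1}(h_{21})$ and $w\tl\psi^{-1}(h_{22})$, with coaction part $\a^{-1}(\cdots)\b^{-1}(\cdots)$. First use BiHom-coassociativity of $H$ to rebracket the three legs $h_1,h_{21},h_{22}$ as $h_{11},h_{12},h_2$; this introduces $\om$ on the first leg and $\psi$ on the last. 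Next use BiHom-associativity, $\a(a)(bc)=(ab)\b(c)$, to regroup the product $X(\a^{-1}(Y)\b^{-1}(Z))$ so that the factor involving $v$ sits next to its $h$-leg. Then apply (\ref{e3.1}) for $V$ to commute that leg past $v$'s coaction. After re-associating again, apply (\ref{e3.1}) for $W$ in the same way. Finally, collect the twisting powers to reach
\begin{align*}
(v\o w)_{(0)}\tl\psi(h_1)\o \b\bigl((v\o w)_{(1)}\bigr)\a^{m-1}\b^{n}\om^{p-1}\psi^{q}(h_2).
\end{align*}

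The difficulty in Step 3 is bookkeeping: at each application of (\ref{e3.1}) the exponents of $\a,\b,\om,\psi$ must be exactly those required. I would first track only the exponent vectors on each tensor leg, to confirm that the $\om^{-1}$ and $\psi^{-1}$ in the action and the $\a^{-1}$ and $\b^{-1}$ in the coaction are precisely what make them match. If a mismatch appears, I would instead substitute $h\mapsto$ a suitable twist of $h$ in (\ref{e3.1}), which is legitimate because all maps are bijective and commute.
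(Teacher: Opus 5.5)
Your plan is correct and is the same direct Sweedler-notation verification the paper relies on: the paper gives no argument itself, only a pointer to the computation of Theorem 4.4 in \cite{ZWC24}. In Step 3 the exponents close up with no fallback needed: after applying (\ref{e3.1}) for $V$, re-associating, and rebracketing $\D$ once more, the $w$-part of the last factor is $\b^{-1}\bigl[\a^{m-2}\b^{n+1}\om^{p-2}\psi^{q+1}(k_{1})\,(w\tl k_{2})_{(1)}\bigr]$ with $k=\psi^{-1}(h_{2})$, which is exactly what (\ref{e3.1}) for $W$ needs (a substitution $h\mapsto\tau(h)$ could not have repaired a mismatch anyway, since it twists both legs of $\D(h)$ equally). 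One slip: with the paper's conventions (e.g.\ $H$ acting on itself by $\mu$, or $z\tl 1=az=\a_{V}(z)$ in the final example), a right module satisfies $(x\tl g)\tl\b(h)=\a_{M}(x)\tl(gh)$ and $x\tl 1=\a_{M}(x)$, not the $\a/\b$-swapped axioms you wrote; your Step 1 check goes through in the same way with the corrected ones.
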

The proof is not hard and the interested readers can refer to the calculation of Theorem 4.4 in \cite{ZWC24}.

\begin{theorem}\label{T3.5}
Let $H$ be a BiHom-Hopf algebra with a bijective antipode $S$. Then the category $\mathcal{YD}^{H}_{H}(m,n,p,q)$ is a strict braided monoidal category, with tensor product $\o$ defined as in Lemma \ref{L3.3}. Its braiding structure is defined by
\begin{align*}
&c_{V,W}:V\o W\ra W\o V,
\\&c_{V,W}(v\o w)=\om_{W}^{-1}(w_{(0)})\o \a_{V}^{-1}(v)\tl \a^{-m+1}\b^{-n}\om^{-p+1}\psi^{-q}(w_{(1)})
\end{align*}
and the inverse
\begin{align*}
&c_{V,W}^{-1}:W\o V\ra V\o W,
\\&c_{V,W}^{-1}(w\o v)=\a^{-1}_{V}(v)\tl S^{-1}\a^{-m}\b^{-n+1}\om^{-p}\psi^{-q+1}(w_{(1)})\o  \om^{-1}_{W}(w_{(0)}),
\end{align*}
for any $v\in V$ and $w\in W$.
\end{theorem}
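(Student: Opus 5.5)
By Lemma \ref{L3.3}, $V\o W$ is again an object of $\mathcal{YD}^{H}_{H}(m,n,p,q)$. The plan is therefore to establish four things, in this order: (a) the monoidal structure is strict; (b) $c_{V,W}$ is a morphism in $\mathcal{YD}^{H}_{H}(m,n,p,q)$; (c) $c_{V,W}$ is natural and invertible with the stated inverse; (d) the two hexagon identities hold. Since the structure is strict, the hexagons reduce to
\[
c_{U,V\o W}=(\mathrm{id}_V\o c_{U,W})\circ(c_{U,V}\o \mathrm{id}_W),\qquad c_{U\o V,W}=(c_{U,W}\o \mathrm{id}_V)\circ(\mathrm{id}_U\o c_{V,W}).
\]

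\textbf{Strictness.} Take the unit object $\Bbbk$ with trivial action $\lambda\tl h=\v(h)\lambda$, trivial coaction $\lambda\m\lambda\o 1_H$, and identity structure maps. We then check that $(U\o V)\o W=U\o(V\o W)$ as objects. For the actions this follows from BiHom-coassociativity, $\om(h_1)\o h_{21}\o h_{22}=h_{11}\o h_{12}\o\psi(h_2)$, together with the twists $\om^{-1},\psi^{-1}$ in the definition. For the coactions it follows from BiHom-associativity, $\a(a)(bc)=(ab)\b(c)$, together with the twists $\a^{-1},\b^{-1}$.

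If the identities do not match exactly, I would adjust the structure maps on $U\o V$ (e.g. use $\a_U\o\a_V$, etc.) so that the two bracketings coincide. This is the same verification as in Theorem 4.4 of \cite{ZWC24}, which the paper cites for Lemma \ref{L3.3}.

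\textbf{Morphism property.} Colinearity of $c_{V,W}$ uses three ingredients:
\begin{itemize}
\item the comodule axiom for $W$, $\om_W(w_{(0)})\o w_{(1)1}\o w_{(1)2}=w_{(0)(0)}\o w_{(0)(1)}\o\psi(w_{(1)})$;
\item the Yetter--Drinfel'd condition \eqref{e3.1} applied to $v\tl h_2$ with $h=\a^{-m+1}\b^{-n}\om^{-p+1}\psi^{-q}(w_{(1)})$;
\item the commutation of all structure maps with each other and with $\r$.
\end{itemize}
The exponent shifts $m-2,n+1,p-2,q+1$ in \eqref{e3.1} are designed precisely so that this substitution works.

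Linearity is analogous: expand $(v\o w)\tl h$, use the module axiom $(v\tl g)\tl\b(k)=\a_V(v)\tl(gk)$ in its right-hand version, and apply \eqref{e3.1} for $W$ to commute $h_1$ past the coaction of $w$.

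\textbf{Naturality and invertibility.} Naturality is immediate, since morphisms commute with actions, coactions and all structure maps. To show $c^{-1}_{V,W}\circ c_{V,W}=\mathrm{id}$, compose the two maps:
\begin{itemize}
\item the coaction of $\om_W^{-1}(w_{(0)})$ produces $w_{(0)(1)}$;
\item coassociativity rewrites this as $w_{(1)1}\o w_{(1)2}$ up to twists;
\item the module axiom merges the two actions into $v\tl\big(g_1 S^{-1}(g_2)\big)$ up to twists;
\item Proposition (2), $a_2S^{-1}\a^{2}\b^{-2}(a_1)=\v(a)1_H$ and its $\om,\psi$ analogue, collapses this to $\v$;
\item the counit axioms remove the remaining twists.
\end{itemize}
The other composite is handled symmetrically, using $S^{-1}\a^{-2}\b^2(a_2)a_1=\v(a)1_H$.

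\textbf{Hexagons.} For the first hexagon, expand $c_{U,V\o W}(u\o v\o w)$ using the coaction $(v\o w)_{(1)}=\a^{-1}(v_{(1)})\b^{-1}(w_{(1)})$. Then split the action of a product by the module axiom, $u\tl(xy)=\a_U^{-1}(u)\tl x\ldots$ suitably twisted, and compare with applying $c_{U,V}$ first and then $c_{U,W}$. For the second hexagon, expand $(u\o v)\tl h$ via $\om^{-1}(h_1),\psi^{-1}(h_2)$ and use the comodule coassociativity of $W$ to split $w_{(1)}$.

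\textbf{Main obstacle.} I expect the bookkeeping of the twist exponents to be the main difficulty. Each step introduces factors $\a^{\pm1},\b^{\pm1},\om^{\pm1},\psi^{\pm1}$, and these must match exactly across both sides. The relations $\a^2\om^2=\b^2\psi^2$ and the commutation of $S$ with all maps may be needed to reconcile them.

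I would first carry out the whole computation for the Hom case $\a=\b=\om=\psi$ to fix the pattern of exponents, then reinsert the four maps separately.
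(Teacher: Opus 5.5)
Your plan is correct and is the same direct verification the paper relies on; the paper does no computation of its own and simply points to the calculation of Theorem 4.4 in \cite{ZWC24}. Carried out, the twists close up as you expect: colinearity uses \eqref{e3.1} for $V$ at $\a_V^{-1}(v)$ and $h=\a^{-m+1}\b^{-n}\om^{-p+1}\psi^{-q-1}(w_{(1)})$, after applying the coassociativity of $W$; linearity uses \eqref{e3.1} for $W$ with $h$ replaced by $\psi^{-1}(h)$; and the two composites of $c_{V,W}$ and $c_{V,W}^{-1}$ reduce exactly to $a_2S^{-1}\om^{-2}\psi^{2}(a_1)=\v(a)1_H$ and $S^{-1}\a^{-2}\b^{2}(a_2)a_1=\v(a)1_H$.
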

We can refer to the calculation of Theorem 4.4 in \cite{ZWC24}.
\section{A category equivalence}
\def\theequation{4.\arabic{equation}}
\setcounter{equation} {0}
In this section, let $H$ be a BiHom-Hopf algebra, we establish an equivalence between the monoidal category $(~\!^{H}_{H}\mathfrak{M}^{H}_{H},\o_{H})$ and $(~\!^{H}_{H}\mathfrak{M}^{H}_{H},\B_{H})$, and the monoidal category $\mathcal{YD}^{H}_{H}(m,n,p,q)$ over $H$. This equivalence generalizes the main result in \cite{S94}.

\begin{lemma}\label{L2}
Let $V$ be a linear space and $\a_{V},\b_{V},\om_{V},\psi_{V}\in Aut(V)$. Endow the object $H\o V\in \!^{H}_{H}\mathfrak{M}$ with the structures given as in Example $\ref{E1}$ $(1)$. Then there is a bijection between
\begin{enumerate}
\item[$(1)$] a right $H$-comodule structures on $H\o V$ making $H\o V$ an object of $\!^{H}_{H}\mathfrak{M}^{H}$;
\item[$(2)$] a right $H$-comodule structures on $ V$ making $\e\o V:V\ra H\o V$ a morphism of right $H$-comodules.
\end{enumerate}
\end{lemma}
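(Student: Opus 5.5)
This is the BiHom analogue of Schauenburg's first lemma: both directions are built explicitly, and the real work is checking that they are mutually inverse.

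\textbf{Setting up the two maps.} Here $\e$ denotes the unit map $\Bbbk\to H$, so $\e\o V:V\to H\o V$ is $v\mapsto 1_H\o v$.

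\emph{From (1) to (2).} Suppose $\r^{r}:H\o V\to H\o V\o H$ makes $H\o V$ an object of $\!^{H}_{H}\mathfrak{M}^{H}$. I define a coaction on $V$ by
\[
\r_V=(\v\o\mathrm{id}_V\o\mathrm{id}_H)\circ\r^{r}\circ(\e\o V),
\]
possibly precomposed or postcomposed with a suitable power of $\psi_V$ (or $\psi$) to absorb the twist $\v(g_1)g_2=\psi(g)$ that appears when $\v$ is applied to the first leg.

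\emph{From (2) to (1).} Given a right coaction $v\mapsto v_{(0)}\o v_{(1)}$ on $V$, I put a right coaction on $H\o V$ by the diagonal formula
\[
\r^{r}(g\o v)=g_1\o v_{(0)}\o g_2v_{(1)},
\]
again adjusting by powers of $\a,\b,\om,\psi$ on $H$ and the corresponding maps on $V$ so that everything lands consistently.

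\textbf{Checks, in order.}
\begin{enumerate}
\item[(a)] The new $\r^{r}$ on $H\o V$ is BiHom-coassociative and counital. This uses that $\D$ is multiplicative and the coassociativity of $\D$ and of $\r_V$.
\item[(b)] $\r^{r}$ commutes with $\a\o\a_V$, $\b\o\b_V$, $\om\o\om_V$ and $\psi\o\psi_V$.
\item[(c)] The left–right Hopf compatibility (\ref{e2.2}) holds for the action $h\c(g\o v)=hg\o\b_V(v)$. This follows from $\D(hg)=h_1g_1\o h_2g_2$ together with BiHom-associativity.
\item[(d)] The left-comodule/right-comodule bicomodule compatibility holds with $\r^{l}(g\o v)=g_1\o g_2\o\psi_V(v)$. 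This reduces to coassociativity of $\D$.
\end{enumerate}
Together these give an object of $\!^{H}_{H}\mathfrak{M}^{H}$. That $\e\o V$ is then a comodule map is immediate from $\D(1)=1\o1$ and $1\,x=\b(x)$, so the twists must be chosen to cancel exactly here. In the other direction, $\r_V$ is a BiHom-comodule structure: apply $\v$ to the first leg of coassociativity of $\r^{r}$, and use that $\e\o V$ is a comodule morphism.

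\textbf{Bijectivity, the main obstacle.} Starting from $\r^{r}$ on $H\o V$, I must show it is recovered from $\r_V$. Write $g\o v$ as $g\c(1\o\b_V^{-1}(v))$ up to an $\a$-twist, namely $g1=\a(g)$. Then (\ref{e2.2}) gives
\[
\r^{r}(g\o v)=g_1\c(1\o v')_{(0)}\o g_2(1\o v')_{(1)}.
\]
The bicomodule condition with $\r^{l}$ pins down the $H$-component of $(1\o v')_{(0)}$ as $1$ up to twist, because applying $\r^{l}$ shows that component is grouplike-like and then the counit forces it. So $\r^{r}$ is determined by its restriction to $1\o V$, which is $\r_V$.

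The converse composite, (2)$\to$(1)$\to$(2), is immediate from counitality. The delicate part throughout is bookkeeping of the powers of $\a,\b,\om,\psi$. I would fix the twists by demanding that the $1\o V$ restriction be exactly $(\e\o V)\o\mathrm{id}_H$ applied to $\r_V$, and then verify (a)–(d) with those choices.
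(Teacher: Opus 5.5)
You follow the paper's route. In one direction you recover $\rho_V=(\varepsilon\otimes V\otimes H)\circ\rho^{r}\circ(\eta\otimes V)$; in the other you put a twisted diagonal coaction on $H\otimes V$. Two of your steps are more careful than the paper's proof. The paper uses $(\eta\otimes V\otimes H)\circ(\varepsilon\otimes V\otimes H)\circ\rho^{r}\circ(\eta\otimes V)=\rho^{r}\circ(\eta\otimes V)$ without showing that $\rho^{r}(1\otimes v)$ has first leg $1$. It also never checks that the two constructions are mutually inverse.

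Your bicomodule argument settles the first point. Since $\rho^{l}(1\otimes v)=1\otimes 1\otimes\psi_V(v)$ and $\rho^{r}$ commutes with $\psi\otimes\psi_V$, the bicomodule condition at $1\otimes v$ reads $(\rho^{l}\otimes\psi)\rho^{r}(1\otimes v)=1\otimes(\psi\otimes\psi_V\otimes\psi)\rho^{r}(1\otimes v)$. Applying $\mathrm{id}\otimes\varepsilon\otimes\psi_V^{-1}\otimes\psi^{-1}$ gives $(\omega\otimes\mathrm{id}\otimes\mathrm{id})\rho^{r}(1\otimes v)=1\otimes\rho_V(v)$. Because $\omega(1)=1$, this means $\rho^{r}(1\otimes v)=1\otimes\rho_V(v)$, so no $\psi_V$- or $\psi$-correction is needed when defining $\rho_V$. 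Your uniqueness argument, via $g\otimes v=\alpha^{-1}(g)\cdot(1\otimes\beta_V^{-1}(v))$ and (\ref{e2.2}), settles the second point.

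What is still missing is the one genuinely BiHom ingredient: the explicit twist, whose shape matters. Your criterion combined with the uniqueness computation forces
\[
\rho^{r}(g\otimes v)=g_{1}\otimes v_{(0)}\otimes\alpha^{-1}(g_{2})\,\beta^{-1}(v_{(1)}),
\]
with separate twists on the two factors. No single automorphism $X$ applied to the whole product $g_2v_{(1)}$ works in general, for two conflicting reasons:
\begin{enumerate}
\item[(i)] Condition (\ref{e2.2}) needs $X\alpha=\mathrm{id}$, since $X\bigl((h_2g_2)\beta(v_{(1)})\bigr)=X\alpha(h_2)\,X(g_2v_{(1)})$ must equal $h_2\,X(g_2v_{(1)})$.
\item[(ii)] The requirement $\rho^{r}(1\otimes v)=1\otimes\rho_V(v)$ needs $X\beta=\mathrm{id}$, since $1\,x=\beta(x)$.
\end{enumerate}
The paper's Eq.~(\ref{e4.6}) is of this single-twist type, with $X=\alpha^{-1}$. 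It gives $\rho^{r}(1\otimes v)=1\otimes v_{(0)}\otimes\alpha^{-1}\beta(v_{(1)})$. So with that formula, $\eta\otimes V$ intertwines $\rho^{r}$ with $(\mathrm{id}_V\otimes\alpha^{-1}\beta)\circ\rho_V$ rather than with $\rho_V$. The two constructions as written there are therefore not mutually inverse unless $\alpha=\beta$; do not copy that formula.

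With the displayed formula, checks (a)--(d) go through as you indicate. For (c), the relevant instance of BiHom-associativity is $h_2\bigl(\alpha^{-1}(g_2)\beta^{-1}(v_{(1)})\bigr)=\bigl(\alpha^{-1}(h_2)\alpha^{-1}(g_2)\bigr)v_{(1)}$. This matches the term $\alpha^{-1}(h_2g_2)\,v_{(1)}$ that comes from $\rho^{r}(hg\otimes\beta_V(v))$.
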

\begin{proof} $(2)\Rightarrow (1)$
If $V$ is a right $H$-comodule, for any $h \in H$ and $v\in V$, we define
\begin{align}\label{e4.6}
\r^{r}:H\o V\ra H\o V\o H, \r^{r}(g\o v)=g_{1}\o v_{(0)}\o \a^{-1}(g_{2}v_{(1)}),
\end{align}
for any $g\in H$ and $v\in V$.
By the definition of two-cosided BiHom-Hopf module, we only need to prove that $H\o V$ is a right $H$-comodule and the object $H\o V\in \!_{H}\mathfrak{M}^{H}$. We first prove that $H\o V$ is a right $H$-comodule. For any $h \in H$ and $v\in V$, we have
\begin{align*}
(\r^{r}\o \psi)\r^{r}(h\o v)&=(\r^{r}\o \psi)(h_{1}\o v_{(0)}\o \a^{-1}(h_{2}v_{(1)}))
\\&=h_{11}\o v_{(0)(0)}\o \a^{-1}(h_{12}v_{(0)(1)})\o \a^{-1}\psi (h_{2}v_{(1)})
\\&=\om(h_{1})\o \om_{V}(v_{(0)})\o \a^{-1}(h_{21}v_{(1)1})\o \a^{-1}(h_{22}v_{(1)2})
\\&=(\om\o \om_{V}\o \D)\r^{r}(h\o v),
\end{align*}
It is easy to check that the rest equation hold.

Finally, we verify that the compatibility condition $(\ref{e2.2})$. For any $h,g \in H$ and $v\in V$, we have
\begin{align*}
\r^{r}(h\c (g\o v))&=\r^{r}(hg\o \b_{V}(v))
\\&=h_{1}g_{1}\o \b_{V}(v_{(0)})\o \a^{-1}[(h_{2}g_{2})\b(v_{(1)})]
\\&=h_{1}\c(g_{1}\o v_{(0)})\o h_{2}\a^{-1}(g_{2}v_{(1)})
\\&=h_{1}\c(g\o v)_{(0)}\o h_{2}(g\o v)_{(1)}.
\end{align*}

$(1)\Rightarrow (2)$ If $H\o V\in \!^{H}_{H}\mathfrak{M}^{H}$ with the right $H$-comodule structure $\r^{r}:H\o V\ra H\o V\o H$. Then there is a unique right $H$-comodule structure on $V$ given by 
$$\r=(\v\o V\o H)\circ \r^{r}\circ (\e\o V):V\ra V\o H.$$

First, applying $(\e\o V\o H)$ to both sides of the equation above, we obtain
\begin{align*}
(\e\o V\o H)\circ \r=(\e\o V\o H)\circ(\v\o V\o H)\circ\r^{r}\circ(\e\o V)=\r^{r}\circ(\e\o V).
\end{align*}
Thus $\e\o V$ is a morphism of right $H$-comodules.

Next, we prove that $V$ is a right $H$-comodule. We have
\begin{align*}
(\r\o \psi)\circ\r=&(\v\o V\o H\o H)\circ(\r^{r}\o \psi)\circ(\e\o V\o H)\circ(\v\o V\o H)\circ\r^{r}\circ(\e\o V)
\\&=(\v \o V\o H\o H)\circ(\r^{r}\o \psi)\circ\r^{r}\circ(\e\o V)
\\&=(\v\o V\o H\o H)\circ(\om\o\om_{V}\o \D)\circ\r^{r}\circ(\e\o V)
\\&=(\om_{V}\o \D)\circ(\v\o V\o H)\circ\r^{r}\circ(\e\o V)
\\&=(\om_{V}\o \D)\circ\r
\end{align*}
and the other equation is easy to prove.

This completes the proof.
\end{proof}
The proof of Lemma \ref{L2} was given in the framework of general monoidal categories. Applying the lemma to the opposite category gives:
\begin{lemma}\label{L1}
Let $V$ be a linear space and $\a_{V},\b_{V},\om_{V},\psi_{V}\in Aut(V)$. Endow $H\o V\in \!^{H}_{H}\mathfrak{M}$ with the structures given as in Example $\ref{E1}$ $(1)$. Then there is a bijection between
\begin{enumerate}
\item[$(1)$] right $H$-module structures on $H\o V$ making $H\o V$ an object of $\!^{H}_{H}\mathfrak{M}_{H}$;
\item[$(2)$] right $H$-module structures on $ V$ making $\v\o V:H\o V\ra V$ a morphism of right $H$-modules.
\end{enumerate}
\end{lemma}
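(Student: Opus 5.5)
The plan is to dualize the proof of Lemma \ref{L2} directly, rather than rely only on the formal remark about opposite categories. Concretely, I will write down both directions of the bijection.

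\emph{From $(2)$ to $(1)$.} Given a right $H$-module structure $\tl$ on $V$, I will define a right action on $H\o V$ by
\begin{align*}
(g\o v)\c h=g h_{1}\o v\tl h_{2},
\end{align*}
with $\a,\b$-twists inserted where needed. For example, $\b^{-1}$ or $\a^{-1}$ may be required on the appropriate leg, mirroring the $\a^{-1}$ that appears in Eq. (\ref{e4.6}). These twists are fixed by the requirement that $\v\o V$ be right $H$-linear:
\begin{align*}
\v(gh_{1})\,v\tl h_{2}=\v(g)\,v\tl h.
\end{align*}
This uses $\v(h_{1})h_{2}=\psi(h)$, so a compensating $\psi^{-1}$ on $h$ will likely be needed.

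Next I will check three things for this action.
\begin{itemize}
\item[(a)] BiHom-associativity $(x\c h)\c\b(g)=\a_{H\o V}(x)\c(hg)$. This follows from multiplicativity of $\D$, BiHom-associativity in $H$, and the module axiom for $V$.
\item[(b)] The bimodule compatibility with the left action $h\c(g\o v)=hg\o\b_V(v)$. This is just BiHom-associativity in $H$ on the first leg.
\item[(c)] The right-left Hopf compatibility (\ref{e2.3}) with $\r^{l}(g\o v)=g_{1}\o g_{2}\o\psi_V(v)$. This follows from $\D$ being multiplicative together with BiHom-coassociativity, used to regroup $g_{1}h_{1}\o g_{2}h_{21}\o h_{22}$.
\end{itemize}

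\emph{From $(1)$ to $(2)$.} Given a right action on $H\o V$ making it an object of ${}^{H}_{H}\mathfrak{M}_{H}$, I will define
\[
v\tl h=(\v\o V)\big((\e\o V)(v)\c h\big),
\]
that is, $\tl=(\v\o V)\circ \c\circ(\e\o V\o H)$. Right linearity of $\v\o V$ then needs the identity $(\v\o V)(x\c h)=(\v\o V)(x)\tl h$ for all $x\in H\o V$. To prove it, I will use that every $g\o v$ equals, up to the twists, $g\c(1\o v)$, and that the left and right actions commute via the bimodule axiom. This gives
\[
(g\o v)\c h=\b^{-1}(g)\c\big((1\o \b_V^{-1}v)\c \b^{-1}h\big)
\]
up to twists, and then $\v\o V$ applied to $g\c y$ equals $\v(g)$ times $(\v\o V)(y)$, twisted by $\b_V$. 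The module axioms for $\tl$ on $V$ are obtained by the same chain of compositions as in Lemma \ref{L2}, with $\r^{r}$ replaced by the action and $\e$ by $\v$.

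Finally, I will verify that the two constructions are mutually inverse. Going from $V$ to $H\o V$ and back is immediate from $\v(1h_{1})h_{2}=\psi(h)$ after untwisting. The other direction is the genuinely nontrivial part, and I expect it to be the main obstacle: I must show that any compatible right action on $H\o V$ is forced to be of the form $(g\o v)\c h=gh_{1}\o v\tl h_{2}$. The plan is to apply $\r^{l}$ together with (\ref{e2.3}) to $(1\o v)\c h$, then apply $\mathrm{id}\o\v\o V$. The coaction $\r^{l}$ determines the $H$-leg as $h_{1}$ times the counit-projected part, so $(1\o v)\c h=h_{1}\o v\tl h_{2}$ up to twists. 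The general case $g\o v$ then follows from the left action via the bimodule axiom. Getting all $\a,\b,\om,\psi$ powers consistent here, using that they commute and are bi(co)algebra maps, is where the bookkeeping will be heaviest.
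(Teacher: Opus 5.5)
Your proposal is correct and takes the same route as the paper. The paper only says the lemma is the dual of Lemma \ref{L2} and omits the proof, and your direct dualization supplies the missing steps: right linearity of $\v\o V$ via $g\o v=\a^{-1}(g)\c(1\o\b_V^{-1}(v))$ and the bimodule axiom, and uniqueness of the action from $(H\o\v\o V)\circ\r^{l}=\om\o\psi_V$ together with (\ref{e2.3}), which also gives the mutual-inverse check that the paper's proof of Lemma \ref{L2} skips. When you fix the twists, note that $\v\o V$-linearity pins only the $V$-leg to $\psi^{-1}(h_2)$, while (\ref{e2.3}) forces $\om^{-1}$ (not an $\a$- or $\b$-twist) on the $H$-leg, giving $(g\o v)\c h=g\om^{-1}(h_1)\o v\tl\psi^{-1}(h_2)$; the paper's (\ref{e4.5}) differs from this only by precomposing the action on $V$ with the automorphism $\a^{-m+2}\b^{-n}\om^{-p+1}\psi^{-q+1}$.
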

If $V$ is a right $H$-module. The induced right $H$-module structure on $H\o V$ are defined as follows, for any $h,g\in H$ and $v\in V$:
\begin{align}
(g\o v)\c h&=g\om^{-1}(h_{1})\o v\tl \a^{-m+2}\b^{-n}\om^{-p+1}\psi^{-q}(h_{2}), \label{e4.5}
\end{align}
where $m,n,p,q\in\mathbb{Z}$. The proof is analogous to that of Lemma \ref{L2} and is omitted for brevity.
\begin{theorem}
Let $V$ be a linear space and $\a_{V},\b_{V},\om_{V},\psi_{V}\in Aut(V)$. Endow the object $H\o V\in \!^{H}_{H}\mathfrak{M}$ with the structures given as in Example $\ref{E1}$ $(1)$. Then there is a bijection between
\begin{enumerate}
\item[$(1)$] a right $H$-module structure and a right $H$-comodule structure on $H\o V$
making $H\o V$ an object of $\!^{H}_{H}\mathfrak{M}^{H}_{H}$;
\item[$(2)$] a structure of right-right BiHom-$(m,n,p,q)$-Yetter-Drinfel'd module on $V$.
\end{enumerate}
\end{theorem}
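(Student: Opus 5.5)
The plan is to combine Lemma \ref{L2} and Lemma \ref{L1}, and then show that the one extra compatibility needed for a four-angle Hopf module, Eq. (\ref{e2.4}), is equivalent on the $V$-level to the Yetter-Drinfel'd condition (\ref{e3.1}).

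\emph{From (2) to (1).} Start from a right-right BiHom-$(m,n,p,q)$-Yetter-Drinfel'd module $V$ with action $\tl$ and coaction $v\mapsto v_{(0)}\o v_{(1)}$.
\begin{itemize}
\item Define the right coaction on $H\o V$ by (\ref{e4.6}). By Lemma \ref{L2} this makes $H\o V\in\!^{H}_{H}\mathfrak{M}^{H}$.
\item Define the right action on $H\o V$ by (\ref{e4.5}). By Lemma \ref{L1} this makes $H\o V\in\!^{H}_{H}\mathfrak{M}_{H}$.
\item By Definition \ref{L2.5}(iii), it remains to check two things:
\begin{enumerate}
\item the right coaction is compatible with the left coaction (bicomodule condition) and the right action is compatible with the left action (bimodule condition);
\item the right-right compatibility (\ref{e2.4}).
\end{enumerate}
\item The bicomodule and bimodule conditions are short. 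They follow from BiHom-(co)associativity in $H$, because the left structures act only on the $H$-factor.
\end{itemize}

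\emph{The key computation: (\ref{e2.4}).} Expand both sides for $(g\o v)\c h$.
\begin{itemize}
\item Left side: apply (\ref{e4.6}) to $g\om^{-1}(h_1)\o v\tl T'(h_2)$, where $T'=\a^{-m+2}\b^{-n}\om^{-p+1}\psi^{-q}$. This gives
\[
g_1\om^{-1}(h_{11})\o (v\tl T'(h_2))_{(0)}\o \a^{-1}\big((g_2\om^{-1}(h_{12}))(v\tl T'(h_2))_{(1)}\big).
\]
\item Right side: expand $(g_1\o v_{(0)})\c h_1\o \a^{-1}(g_2v_{(1)})h_2$ with the same formulas.
\item Rebracket the coproduct indices with BiHom-coassociativity, so that $h_{12}$ and $h_2$ on the left become a pair $h'_1,h'_2$ sitting next to $v$.
\item Use BiHom-associativity to move $g_2$ outside.
\item After these steps, the remaining identity in the $V\o H$ slots is exactly (\ref{e3.1}), with $h$ replaced by a suitable twist of $h_2$.
\end{itemize}
The exponents in (\ref{e4.5}) were evidently chosen so that this matching works. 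The main obstacle is the bookkeeping of the twisting maps $\a,\b,\om,\psi$ and their powers through this rebracketing. I would first verify the case $m=n=p=q=0$ to fix the pattern, and then track the exponent shifts, which enter additively.

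\emph{From (1) to (2).} Suppose $H\o V\in\!^{H}_{H}\mathfrak{M}^{H}_{H}$.
\begin{itemize}
\item Lemmas \ref{L2} and \ref{L1} give a coaction and an action on $V$:
\[
\r=(\v\o V\o H)\circ\r^{r}\circ(\e\o V), \qquad \tl \text{ induced via } \v\o V.
\]
\item They also show that the structures on $H\o V$ are the induced ones (\ref{e4.6}) and (\ref{e4.5}), by uniqueness in those bijections.
\item Evaluate (\ref{e2.4}) at $g=1_H$, then apply $\v\o V\o H$ to both sides. Using $\v(1)=1$ and the counit identities, the result is (\ref{e3.1}) for $V$.
\end{itemize}
So the same computation read backwards gives this direction.

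\emph{Bijectivity.} The two constructions are mutually inverse because the bijections in Lemmas \ref{L2} and \ref{L1} are.
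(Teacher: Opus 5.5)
Your proposal is correct and takes essentially the same route as the paper. You reduce, via Lemmas \ref{L2} and \ref{L1}, to the right--right compatibility (\ref{e2.4}) for the structures (\ref{e4.5}) and (\ref{e4.6}), and match both sides with (\ref{e3.1}) after rebracketing and substituting $h\mapsto A^{-1}\a^{2}\om\psi^{-1}(h_{2})$, where $A=\a^{m}\b^{n}\om^{p}\psi^{q}$; for the converse you set $g=1_H$ and apply the counit to the first factor. The differences are cosmetic: the paper applies $\v\o V\o\a$ and then substitutes $h\mapsto A\a^{-2}\om^{-1}(h)$ to land exactly on (\ref{e3.1}), and it leaves implicit the bimodule and bicomodule checks that you make explicit.
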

\begin{proof}
Based on Lemma \ref{L2} and \ref{L1}, we only need to prove that the condition on the right $H$-module structure and right $H$-comodule structure on $V$ that they define a right-right BiHom-$(m,n,p,q)$-Yetter-Drinfel'd module is equivalent to the condition making $H\o V$ an object of $\mathfrak{M}^{H}_{H}$.

Let $(V,\tl)$ be a right $H$-module and $(V, \r)$ a right $H$-comodule. The induced right $H$-module structure and right $H$-comodule structure on $H\o V$ are defined in Eqs. (\ref{e4.5}), (\ref{e4.6}), respectively.

Let $A=\a^{m}\b^{n}\om^{p}\psi^{q}$ with $m,n,p,q\in \mathrm{Z}$. For any $h,g\in H$ and $v\in V$, we have
\begin{align*}
\r^{r}&((g\o v)\c h)=\r^{r}(g\om^{-1}(h_{1})\o v\tl A^{-1}\a^{2}\om(h_{2}))
\\&=g_{1}\om^{-1}(h_{11})\o (v\tl A^{-1}\a^{2}\om(h_{2}))_{(0)}\o \a^{-1}[[g_{2}\om^{-1}(h_{12})]((v\tl A^{-1}\a^{2}\om(h_{2}))_{(1)})]
\\&=g_{1}h_{1}\o (v\tl A^{-1}\a^{2}\om\psi^{-1}(h_{22}))_{(0)}\o \a^{-1}[[g_{2}\om^{-1}(h_{21})]((v\tl A^{-1}\a^{2}\om\psi^{-1}(h_{22}))_{(1)})]
\\&=g_{1}h_{1}\o (v\tl A^{-1}\a^{2}\om\psi^{-1}(h_{22}))_{(0)}\o g_{2}\a^{-1}\b^{-1}[\b\om^{-1}(h_{21})(v\tl A^{-1}\a^{2}\om\psi^{-1}(h_{22}))_{(1)}]
\end{align*}
and
\begin{align*}
(g\o v)_{(0)}&\c h_{1}\o (g\o v)_{(1)}h_{2}
\\&=(g_{1}\o v_{(0)})\c h_{1}\o \a^{-1} (g_{2}v_{(1)})h_{2}
\\&=g_{1}\om^{-1}(h_{11})\o v_{(0)}\tl A^{-1}\a^{2}\om(h_{12})\o g_{2}[\a^{-1} (v_{(1)})\b^{-1}(h_{2})]
\\&=g_{1}h_{1}\o v_{(0)}\tl A^{-1}\a^{2}\om(h_{21})\o g_{2}\a^{-1}\b^{-1}[\b (v_{(1)})\a\psi^{-1}(h_{22})].
\end{align*}

If $V$ is a right-right BiHom-$(m,n,p,q)$-Yetter-Drinfel'd module, for Eq. $(\ref{e3.1}):$
$$[v\tl h_2]_{(0)}\o A\alpha^{-2}\b\omega^{-2}\psi(h_{1})[v\tl h_2]_{(1)}=v_{(0)}\tl \psi(h_{1})\o\b(v_{(1)})A\alpha^{-1}\omega^{-1}(h_{2}).$$
Replace $h$ by $A^{-1}\a^{2}\om\psi^{-1}(h_{2})$. Then one easily sees that these two terms are equal. Thus $H\o V$ is a right-right BiHom-Hopf module over $H$.

Conversely, assuming that $H\o V$ is an object of $\mathfrak{M}^{H}_{H}$, that is, for any $h\in H$ and $v\in V$, by $\r^{r}((1\o v)\c h)=(1\o v)_{(0)}\c h_{1}\o (1\o v)_{(1)}h_{2}$, we get
\begin{align*}
\b(h_{1})\o &(v\tl A^{-1}\a^{2}\om\psi^{-1}(h_{22}))_{(0)}\o \a^{-1}[\b\om^{-1}(h_{21})(v\tl A^{-1}\a^{2}\om\psi^{-1}(h_{22}))_{(1)}]
\\&=\b(h_{1})\o v_{(0)}\tl A^{-1}\a^{2}\om(h_{21})\o \a^{-1}[\b (v_{(1)})\a\psi^{-1}(h_{22})],
\end{align*}
applying $\v\o V\o \a $ to both sides of the equation above, we get
\begin{align*}
(v\tl A^{-1}\a^{2}\om(h_{2}))_{(0)}\o \b\om^{-1}\psi(h_{1})(v\tl A^{-1}\a^{2}\om(h_{2}))_{(1)}
=v_{(0)}\tl A^{-1}\a^{2}\om\psi(h_{1})\o \b (v_{(1)})\a(h_{2}),
\end{align*}
replacing $h$ by $A\a^{-2}\om^{-1}(h)$, we obtain Eq. (\ref{e3.1}). Thus $V$ is a right-right BiHom-$(m,n,p,q)$-Yetter-Drinfel'd module over $H$.

This completes the proof.
\end{proof}
\begin{theorem}\label{T1}
Let $H$ be a BiHom-Hopf algebra. Then the equivalence
\begin{align*}
\!^{H}_{H}\mathfrak{M}&\cong \mathfrak{C}
\\H\o V&\leftarrow V
\\ M&\ra \!^{coH}M
\end{align*}
induces equivalences of monoidal categories between
\begin{enumerate}
\item[$(1)$] the category $\!^{H}_{H}\mathfrak{M}_{H}$ of two-sided BiHom-Hopf modules with BiHom-tensor product $\o_{H}$ and the category of right $H$-modules,
\item[$(2)$] the category $\!^{H}_{H}\mathfrak{M}^{H}$ of two-cosided BiHom-Hopf modules with BiHom-cotensor product $\B_{H}$ and the category of right $H$-comodules,
\item[$(3)$] the category $\!^{H}_{H}\mathfrak{M}^{H}_{H}$ of BiHom-four-angle Hopf modules with either $\o_{H}$ or $\B_{H}$ as its monoidal product, and the category of right-right BiHom-$(m,n,p,q)$-Yetter-Drinfel'd modules over $H$,
\end{enumerate}
where the right $H$-$(co)$module structures on $H\otimes V$ (for $V$ a right $H$-$(co)$module) are given by Eqs. $(\ref{e4.5})$ and $(\ref{e4.6})$.
\end{theorem}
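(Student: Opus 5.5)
Our plan is to first fix the underlying equivalence $\!^{H}_{H}\mathfrak{M}\cong\mathfrak{C}$, where $\mathfrak{C}$ is the category of linear spaces with four commuting automorphisms. The functor $V\mapsto H\o V$ uses the structures of Example \ref{E1} $(1)$; its quasi-inverse is $M\mapsto \!^{coH}M$, where
\[
\!^{coH}M=\{m\in M\mid \r^{l}(m)=1\o \psi_M(m)\}.
\]
This is the BiHom fundamental theorem for left-left Hopf modules. The unit $V\to \!^{coH}(H\o V)$ is $v\mapsto 1\o v$, up to a twist by $\b_V^{-1}$ or $\psi_V^{-1}$. The counit $H\o \!^{coH}M\to M$ is $h\o m\mapsto h\c m$, suitably twisted by $\b_M^{-1}$. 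Its inverse is built with the antipode,
\[
m\mapsto m_{[-1]1}\o S(m_{[-1]2})\c m_{[0]},
\]
inserting the powers of $\a,\b,\om,\psi$ needed to make BiHom-associativity and BiHom-coassociativity close up. The hardest part of this step is checking that the image lies in $\!^{coH}M$, which uses the compatibility (\ref{e2.1}) and the identity $\D S$ from the Proposition in Section 1.

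Next we lift the equivalence to each category. By Lemma \ref{L1}, right module structures on $V$ correspond bijectively to right module structures on $H\o V$ in $\!^{H}_{H}\mathfrak{M}_{H}$. By Lemma \ref{L2}, right comodule structures on $V$ correspond to objects of $\!^{H}_{H}\mathfrak{M}^{H}$. By the preceding theorem, Yetter-Drinfel'd structures on $V$ correspond to objects of $\!^{H}_{H}\mathfrak{M}^{H}_{H}$. Every object $M$ is isomorphic in $\!^{H}_{H}\mathfrak{M}$ to $H\o \!^{coH}M$, so we transport the extra structure of $M$ to $\!^{coH}M$; concretely, $\!^{coH}M$ is stable under $m\tl h=\v$-projected twisted right action and under the right coaction. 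We also check that morphisms correspond. This gives equivalences of categories in $(1)$–$(3)$.

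For monoidality in $(1)$, we construct natural isomorphisms
\[
(H\o V)\o_{H}(H\o W)\cong H\o(V\o W),\qquad g\o v\o h\o w\mapsto (g\o v)\c h\o w,
\]
with inverse $g\o v\o w\mapsto (g\o v)\o (1\o w)$, twisted appropriately. We must verify that these are left linear and colinear, and that they are right linear when $V\o W$ carries the tensor product of Lemma \ref{L3.3}. The unit object $H$ corresponds to $\Bbbk$.

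For $(2)$, we use the dual isomorphism
\[
(H\o V)\B_{H}(H\o W)\cong H\o(V\o W),
\]
given by applying $\v$ in the middle factor. Its inverse is $g\o v\o w\mapsto g_1\o v_{(0)}\o \a^{-1}(g_2v_{(1)})\o w$, up to twists; this lands in the cotensor product by Eq. (\ref{e2.5}) and (\ref{e4.6}). For $(3)$, we combine the two isomorphisms and check that they respect both extra structures simultaneously.

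The choice of the integers $a,b,c,d$ in $T$ and $\hat T$ must match the twists in Lemma \ref{L3.3} and (\ref{e4.5}); we will fix these by comparing the formulas on elements $1\o v$. The main obstacle is the bookkeeping of these automorphism powers, in particular showing that the $\o_H$-isomorphism intertwines the right action on $H\o(V\o W)$ given by (\ref{e4.5}) with the diagonal action $v\tl\om^{-1}(h_1)\o w\tl\psi^{-1}(h_2)$. Strictness of the coherence maps then follows, since all the isomorphisms are natural and compatible with associativity by direct comparison on generators $g\o v\o w\o u$.
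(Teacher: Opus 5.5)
Your plan is essentially the paper's proof. It uses the same comparison isomorphism $\varphi(g\otimes v\otimes_H h\otimes w)=(T(g)\otimes T_V(v))\cdot T(h)\otimes T_W\beta_W(w)$, whose inverse is a twist of $g\otimes v\otimes w\mapsto (g\otimes v)\otimes_H(1\otimes w)$. Your dual map for $\Box_H$ matches the paper's $\delta^{-1}$ up to twists, and the coherence square is the same. Your $\varepsilon$-projected action on ${}^{coH}M$ is just the paper's explicit adjoint action $S(h_1)\cdot(\beta_M^{-1}(m')\cdot h_2)$, with twists.

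Two small corrections. First, $T$ need not be pinned down by comparison with Lemma \ref{L3.3} and (\ref{e4.5}): the paper builds $T$ into $\varphi$, and the argument works for every choice of $a,b,c,d$. Second, the step that actually needs the Yetter--Drinfel'd condition (\ref{e3.1}) is the right colinearity of $\varphi$ in part (3), not the right linearity you single out, which is a routine computation.
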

\begin{proof}We define the subspace of $M$ by
\begin{align*}
\!^{coH}M=\{m\in M~|~\r^{l}(m)=1\o \psi_{M}(m)\}.
\end{align*}
The right $H$-comodule structure on $\!^{coH}M$ for $M\in \!^{H}_{H}\mathfrak{M}^{H}$ is that of ${}^{coH}M$ as a right $H$-subcomodule of $M$. The right $H$-module structure on $\!^{coH}M$ for $M\in \!^{H}_{H}\mathfrak{M}_{H}$ is defined by, for any $h\in H$ and $m'\in \!^{coH}M$,
$$m'\tl h=S\a^{m-3}\b^{n+1}\omega^{p-3}\psi^{q}(h_{1})\c(\b_{M}^{-1}(m')\c \a^{m-3}\b^{n}\omega^{p-3}\psi^{q}(h_{2})), $$
Let $A=\a^{m}\b^{n}\omega^{p}\psi^{q}$ with $m,n,p,q\in \mathrm{Z}$. We first check that the action is well defined,
\begin{align*}
\r^{l}(m'\tl h)&=\r^{l}(SA\a^{-3}\b\omega^{-3}(h_{1})\c (\b_{M}^{-1}(m')\c A\a^{-3}\omega^{-3}(h_{2})))
\\&=SA\a^{-3}\b\omega^{-3}(h_{1})_{1} (\b_{M}^{-1}(m')_{[-1]}A\a^{-3}\omega^{-3}(h_{21}))
\\&\quad\o SA\a^{-3}\b\omega^{-3}(h_{1})_{2}\c (\b_{M}^{-1}(m')_{[0]}\c A\a^{-3}\omega^{-3}(h_{22}))
\\&=SA\a^{-3}\b\omega^{-2}\psi^{-1}(h_{12}) A\a^{-3}\b\omega^{-3} (h_{21})
\\&\quad\o SA\a^{-3}\b\omega^{-4}\psi(h_{11})\c (\b_{M}^{-1}\psi_{M}(m')\c A\a^{-3}\omega^{-3}(h_{22}))
\\&=SA\a^{-3}\b\omega^{-3}\psi^{-1}(h_{211})A\a^{-3}\b\omega^{-3} \psi^{-1}(h_{212})
\\&\quad\o SA\a^{-3}\b\omega^{-3}\psi(h_{1})\c (\b_{M}^{-1}\psi_{M}(m')\c A\a^{-3}\omega^{-3}(h_{22}))
\\&=1\o SA\a^{-3}\b\omega^{-3}\psi(h_{1})\c (\b_{M}^{-1}\psi_{M}(m')\c A\a^{-3}\omega^{-3}\psi(h_{2}))
\\&=1\o \psi_{M}(m')\tl \psi (h)
\\&=1\o \psi_{M}(m'\tl h).
\end{align*}
It is easy to show that ${}^{coH}M\in \mathfrak{M}_{H}$.

Next, we only need to check the assertion that we have monoidal equivalences. To do this, it is enough to prove that one of the quasi-inverse equivalences is a monoidal functor in each case.

Let $T=\a^{a}\b^{b}\om^{c}\psi^{d}, T_{V}=\a_{V}^{a}\b_{V}^{b}\om_{V}^{c}\psi_{V}^{d}, T_{W}=\a_{W}^{a}\b_{W}^{b}\om_{W}^{c}\psi_{W}^{d}$ with $a,b,c,d\in\mathbb{Z}$. For $(1)$ we show that the map
\begin{align*}
\vp:(H\o V)\o_{H}(H\o W) &\ra H\o V\o W
\\ g\o v\o_{H} h\o w &\m(T(g)\o T_{V}(v))\c T(h)\o T_{W}\b_{W}(w)
\end{align*}
is a morphism of two-sided BiHom-Hopf modules.
For left linearity and colinearity, by computing we have
\begin{align*}
\vp[k\c &(g\o v\o_{H} h\o w)]=\vp[T^{-1}\a^{-1}(k)\c (g\o v)\o_{H} \b(h)\o \b_{W}(w)]
\\&=\vp[T^{-1}\a^{-1}(k)g\o \b_{V}(v)\o_{H} \b(h)\o \b_{W}(w)]
\\&=T(T^{-1}\a^{-1}(k)g)T\b\om^{-1}(h_{1})\o T_{V}\b_{V}(v)\tl TA^{-1}\a^{2}\b\om(h_{2})\o T_{W}\b_{W}^{2}(w)
\\&=k[T(g)T\om^{-1}(h_{1})]\o T_{V}\b_{V}(v)\tl TA^{-1}\a^{2}\b\om(h_{2})\o T_{W}\b_{W}^{2}(w)
\\&=k\c [T(g)T\om^{-1}(h_{1})\o T_{V}(v)\tl TA^{-1}\a^{2}\om(h_{2})\o T_{W}\b_{W}(w)]
\\&=k\c [\vp(g\o v\o_{H} h\o w)]
\end{align*}
and
\begin{align*}
[&\vp(g\o v\o_{H} h\o w)]_{[-1]}\o [\vp(g\o v\o_{H} h\o w)]_{[0]}
\\&=(T(g_{1})T\om^{-1}(h_{11}))\o T(g_{2})T\om^{-1}(h_{12})\o T_{V}\psi_{V}(v)\tl TA^{-1}\a^{2}\om\psi(h_{2})\o T_{W}\b_{W}\psi_{W}(w)
\\&=T(g_{1})T(h_{1})\o T(g_{2})T\om^{-1}(h_{21})\o T_{V}\psi_{V}(v)\tl TA^{-1}\a^{2}\om(h_{22})\o T_{W}\b_{W}\psi_{W}(w)
\\&=T(g_{1}h_{1})\o T(g_{2})T\om^{-1}(h_{21})\o T_{V}\psi_{V}(v)\tl TA^{-1}\a^{2}\om(h_{22})\o T_{W}\b_{W}\psi_{W}(w)
\\&=T(g_{1}h_{1}) \o \vp[g_{2}\o \psi_{V}(v)\o_{H} h_{2}\o \psi_{W}(w)  ],
\\&=(g\o v\o h\o w)_{[-1]} \o \vp[(g\o v\o_{H} h\o w)_{[0]}],
\end{align*}
for any $g,h,k\in H$, $v\in V$ and $w\in W$.
For right linearity, we have
\begin{align*}
\vp[(g\o v&\o_{H} h\o w)\c k]
\\&=\vp[\a(g)\o \a_{V}(v))\o (h\o w)\c T^{-1}\b^{-1}(k)]
\\&=\vp[\a(g)\o \a_{V}(v)\o hT^{-1}\b^{-1}\om^{-1}(k_{1})\o w\tl A^{-1}T^{-1}\a^{2}\b^{-1}\om (k_{2})]
\\&=T\a(g)T\om^{-1}(h_{1}T^{-1}\b^{-1}\om^{-1}(k_{11}))
\\&\quad\o T_{V}\a_{V}(v)\tl TA^{-1}\a^{2}\om(h_{2}\om^{-1}T^{-1}\b^{-1}(k_{12}))\o T_{W}\b_{W}(w\tl A^{-1}T^{-1}\a^{2}\b^{-1}\om (k_{2}))
\\&=[T(g)T\om^{-1}(h_{1})]\om^{-2}(k_{11})
\\&\quad\o [T_{V}(v)\tl TA^{-1}\a^{2}\om(h_{2})]\tl A^{-1}\a^{2}(k_{12})\o T_{W}\b_{W}(w)\tl A^{-1}\a^{2}\om (k_{2}))
\\&=[T(g)T\om^{-1}(h_{1})]\om^{-1}(k_{1})
\\&\quad\o [T_{V}(v)\tl TA^{-1}\a^{2}\om(h_{2})]\tl A^{-1}\a^{2}(k_{21})\o T_{W}\b_{W}(w)\tl A^{-1}\a^{2}\om\psi^{-1}(k_{22})
\\&=[T(g)T\om^{-1}(h_{1})]\om^{-1}(k_{1})\o [T_{V}(v)\tl TA^{-1}\a^{2}\om(h_{2})\o T_{W}\b_{W}(w)]\tl A^{-1}\a^{2}\om(k_{2})
\\&=[T(g)T\om^{-1}(h_{1})\o T_{V}(v)\tl TA^{-1}\a^{2}\om(h_{2})\o T_{W}\b_{W}(w)]\c k
\\&=[\vp(g\o v\o_{H} h\o w)]\c k
\end{align*}

Furthermore, we show that $\vp$ is an isomorphism whose inverse is given by
\begin{align*}
\vp^{-1}:H\o V\o W&\ra (H\o V)\o_{H}(H\o W) 
\\ g\o v\o w &\m T^{-1}\a^{-1}(g)\o T_{V}^{-1}\a_{V}^{-1}(v) \o 1\o T_{W}^{-1}\b_{W}^{-1}(w),
\end{align*}
for any $g,h\in H$, $v\in V$ and $w\in W$, we get
\begin{align*}
\vp^{-1}\circ &\vp(g\o v\o_{H} h\o w)
\\&=T^{-1}\a^{-1}[T(g)T\om^{-1}(h_{1})]\o T_{V}^{-1}\a_{V}^{-1}[T(v)\tl TA^{-1}\a^{2}\om(h_{2})]\o_{H} 1\o w
\\&=\a^{-1}(g)\a^{-1}\om^{-1}(h_{1})\o \a_{V}^{-1}(v)\tl A^{-1}\a\om(h_{2})\o_{H} 1\o w
\\&=[\a^{-1}(g)\o \a_{V}^{-1}(v)]\c \a^{-1}(h)\o_{H} 1\o w
\\&=g\o v\o_{H} \a^{-1}(h)\c (1\o \b_{W}^{-1}(w))
\\&=g\o v\o_{H} \a^{-1}(h)1\o w
\\&=g\o v\o_{H} h\o w
\end{align*}
and
\begin{align*}
\vp\circ \vp^{-1}(g\o v\o w)&=\a^{-1}(g)1_{H}\o \a_{V}^{-1}(v)\tl 1_{H}\o w=g\o v\o w.
\end{align*}

Part $(2)$ is formally dual to $(1)$. We only deal with the half of $(3)$ involving $\o_{H}$ since the other half is dual to this. It remains to check that $\vp$ is right colinearity. For any $g,h\in H$, $v\in V$ and $w\in W$, we get
\begin{align*}
&[\vp(g\o v\o_{H} h\o w)]_{(0)}\o [\vp(g\o v\o_{H} h\o w)]_{(1)}
\\&=T(g_{1})T\om^{-1}(h_{11})\o (T_{V}(v)\tl TA^{-1}\a^{2}\om(h_{2}))_{(0)}\o T_{W}\b_{W}(w_{(0)})
\\&\quad\o \a^{-1}(T(g_{2})T\om^{-1}(h_{12}))\a^{-1}[\a^{-1}(T_{V}(v)\tl TA^{-1}\a^{2}\om(h_{2}))_{(1)}T(w_{(1)})]
\\&=T(g_{1})T\om^{-1}(h_{11})\o (T_{V}(v)\tl TA^{-1}\a^{2}\om(h_{2}))_{(0)}\o T_{W}\b_{W}(w_{(0)})
\\&\quad\o T(g_{2})[[T\a^{-2}\om^{-1}(h_{12})\a^{-2}\b^{-1}(T_{V}(v)\tl TA^{-1}\a^{2}\om(h_{2}))_{(1)}]T\a^{-1}(w_{(1)})]
\\&=T(g_{1})T(h_{1})\o [T_{V}(v)\tl TA^{-1}\a^{2}\om\psi^{-1}(h_{22})]_{(0)}\o T_{W}\b_{W}(w_{(0)})
\\&\quad\o T(g_{2})[\a^{-2}\b^{-1}[T\b\om^{-1}(h_{21})[T_{V}(v)\tl TA^{-1}\a^{2}\om\psi^{-1}(h_{22})]_{(1)}]
T\a^{-1}(w_{(1)})]
\end{align*}
and
\begin{align*}
&\vp[(g\o v\o_{H} h\o w)_{(0)}]\o (g\o v\o_{H} h\o w)_{(1)}
\\&=\vp[g_{1}\o v_{(0)}\o_{H} h_{1}\o w_{(0)}]\o T(\a^{-1}(g_{2}v_{(1)}))T(\a^{-1}(h_{2}w_{(1)}))
\\&=T(g_{1})T\om^{-1}(h_{11})\o T_{V}( v_{(0)})\tl TA^{-1}\a^{2}\om(h_{12})\o T_{W}\b_{W}(w_{(0)})
\\&\quad\o T(g_{2})[[T\a^{-2}(v_{(1)})T\a^{-1}\b^{-1}(h_{2})]T\a^{-1}(w_{(1)})]
\\&=T(g_{1})T(h_{1})\o T_{V}( v_{(0)})\tl TA^{-1}\a^{2}\om(h_{21})\o T_{W}\b_{W}(w_{(0)})
\\&\quad\o T(g_{2})[[T\a^{-2}(v_{(1)})T\a^{-1}\b^{-1}\psi^{-1}(h_{22})]T\a^{-1}(w_{(1)})]
\\&=T(g_{1})T(h_{1})\o T_{V}( v_{(0)})\tl TA^{-1}\a^{2}\om(h_{21})\o T_{W}\b_{W}(w_{(0)})
\\&\quad\o T(g_{2})[\a^{-2}\b^{-1}[T\b(v_{(1)})T\a\psi^{-1}(h_{22})]T\a^{-1}(w_{(1)})].
\end{align*}

By Eq. $(\ref{e3.1}):$
$$[v\tl h_2]_{(0)}\o A\alpha^{-2}\b\omega^{-2}\psi(h_{1})[v\tl h_2]_{(1)}=v_{(0)}\tl \psi(h_{1})\o\b(v_{(1)})A\alpha^{-1}\omega^{-1}(h_{2}).$$
Replace $v$ by $T_{V}(v)$ and $h$ by $TA^{-1}\a^{2}\om\psi^{-1}(h_{2})$. Then one easily sees that these two terms are equal. 

Finally, the coherence condition on monoidal functors follows from the fact that both ways around the rectangle
$$\xymatrix{
  (H\o U)\o_{H}(H\o V)\o_{H}(H\o W) \ar[d]_{\vp\o_{H}\mathrm{id}} \ar[r]^-{\mathrm{id}\o_{H}\vp}      & (H\o U)\o_{H}(H\o V\o W)\ar[d]^{\vp}  \\
  (H\o U\o V)\o_{H}(H\o W)  \ar[r]_-{\vp}               & H\o U\o V\o W            }
  $$
are given by
$T(h)[T^{2}\om^{-1}(g_{1})T^{2}\om^{-2}(f_{11})]\o T_{U}(u)\tl [T^{2}A^{-1}\a^{2}\om(g_{2})T^{2}A^{-1}\a^{2}(f_{12})]
\o T_{V}^{2}\b_{V}(v)\tl  T^{2}A^{-1}\a^{2}\b\om(f_{2})\o T_{W}^{2}\b_{W}^{2}(w)$.

This completes the proof.
\end{proof}
\begin{remark}
(1) If $H$ is a Hom-Hopf algebra, i.e., $\alpha = \beta= \omega = \psi $, and $T=\b^{-1}, A=\alpha\beta\omega\psi$, we obtain an equivalence between the monoidal category $(~\!^{H}_{H}\mathfrak{M}^{H}_{H},\otimes_{H})$ or $(~\!^{H}_{H}\mathfrak{M}^{H}_{H},\Box_{H})$ of four-angle Hopf modules and the monoidal category $\mathcal{YD}^{H}_{H}$ of Yetter-Drinfel'd modules over $H$, This result was introduced in \cite{LYDW26}.

(2) If $H$ is a monoidal Hom-Hopf algebra, i.e., $\omega = \psi = \alpha^{-1} = \beta^{-1}$, and $A=\alpha\beta\omega\psi$, we obtain an equivalence between the monoidal category $(~\!^{H}_{H}\mathfrak{M}^{H}_{H},\otimes_{H})$ or $(~\!^{H}_{H}\mathfrak{M}^{H}_{H},\Box_{H})$ of four-angle Hopf modules and the monoidal category $\mathcal{YD}^{H}_{H}$ of Yetter-Drinfel'd modules over $H$.

(3) If $H$ is a Hopf algebra, i.e., $\alpha = \beta= \omega = \psi=id $, we obtain an equivalence between the monoidal category $(~\!^{H}_{H}\mathfrak{M}^{H}_{H},\otimes_{H})$ or $(~\!^{H}_{H}\mathfrak{M}^{H}_{H},\Box_{H})$ of four-angle Hopf modules and the monoidal category $\mathcal{YD}^{H}_{H}$ of Yetter-Drinfel'd modules over $H$, This result was introduced in \cite{S94}.
\end{remark}
\begin{example}
Let $H_{4}=sp\{1,g, x,gx\}$ be a vector space over $\Bbbk$ with char $\Bbbk\neq 2$ satisfying the following relation:
\begin{align*}
g^{2}=1,x^{2}=0,xg=-gx.
\end{align*}

Let $\forall 0\neq a,b,c,d\in \Bbbk$ with $ac=bd$, define the BiHom-Hopf algebra structure on $H_{4}$ as follows:
\begin{enumerate}
\item[$\bullet$] The automorphism $\a,\b,\om,\psi: H_{4}\ra H_{4}$ is given by
\begin{align*}
&\a(1)=1, \quad \a(g)=g, \quad \a(x)=ax, \quad \a(gx)=agx,
\\&\b(1)=1, \quad \b(g)=g, \quad \b(x)=bx, \quad \b(gx)=bgx,
\\&\om(1)=1, \quad \om(g)=g, \quad \om(x)=cx, \quad \om(gx)=cgx,
\\&\psi(1)=1, \quad \psi(g)=g, \quad \psi(x)=dx, \quad \psi(gx)=dgx.
\end{align*}

\item[$\bullet$] The multiplication $\circ$ is given by:
\begin{align*}
\begin{tabular}{c|c c c c}
	       H&1&g&x&gx\\
	\hline 1&1&g&bx&bgx\\
	       g&g&1&bgx&bx\\
	       x&ax&$-$agx&0&0\\
           gx&agx&$-$ax&0&0\\
\end{tabular}.
\end{align*}
\item[$\bullet$] The comultiplication $\D$, counit $\v$ and antipode $S$ are given by:
\begin{align*}
&\D(1)=1\o 1, \quad\D(g)=g\o g, \quad
\\& \D(x)=cx\o g+1\o dx,\quad \D(gx)=cgx\o 1+g\o dgx,
\\&\v(1)=1_{\Bbbk},\quad\v(g)=1_{\Bbbk},\quad \v(x)=0, \quad\v(gx)=0,
\\&S(1)=1,\quad S(g)=g,\quad S(x)=gx, \quad S(gx)=-x.
\end{align*}
\end{enumerate}

Let $V=sp\{1_{V}, z\}$ over $\Bbbk$ with char $\Bbbk\neq 2$ and define the automorphism $\a_{V}, \b_{V},\om_{V},\psi_{V}: V\ra V$ by
\begin{align*}
&\a_{V}(1_{V})=\b_{V}(1_{V})=\om_{V}(1_{V})=\psi_{V}(1_{V})=1_{V},
\\&\a_{V}(z)=az,\quad \b_{V}(z)=bz,\quad\om_{V}(z)=cz,\quad\psi_{V}(z)=dz,
\end{align*}
Define the action $\tl:V\o H_{4}\ra V$ by
\begin{align*}
&1_{V}\tl 1=1_{V},\quad 1_{V}\tl g=1_{V},\quad 1_{V}\tl x=0,\quad 1_{V}\tl gx=0,
\\&z\tl 1=az,\quad z\tl g=-az,\quad z\tl x=0,\quad z\tl gx=0.
\end{align*}
Define the coaction $\r: V\ra V\o H_{4}$ by
\begin{align*}
&\r(1_{V})=1_{V}\o 1,\quad \r(z)=cz\o g+1_{V}\o dx.
\end{align*}
Then $(V, \z_{V})$ is a right-right BiHom-$(m,n,p,q)$-Yetter-Drinfel'd module,
where $0\neq a,b,c,d\in \Bbbk$. Then one can check that $(H_{4}\o V, \b\o \z_{V})$ is a BiHom-four-angle Hopf module with the following structures:
\begin{enumerate}
\item[$(i)$] the left module structures:
\begin{align*}
\begin{tabular}{c|c c c c}
	\hline $\c$&$1\o 1_{V}$&$g\o 1_{V}$&$x\o 1_{V}$&$gx\o 1_{V}$\\
	\hline 1&$1\o 1_{V}$&$g\o 1_{V}$&$bx\o 1_{V}$&$bgx\o 1_{V}$\\
	       g&$g\o 1_{V}$&$1\o 1_{V}$&$bgx\o 1_{V}$&$bx\o 1_{V}$\\
	       x&$ax\o 1_{V}$&$-agx\o 1_{V}$&0&0\\
           gx&$agx\o 1_{V}$&$-ax\o 1_{V}$&0&0\\
    \hline $\c$&$1\o z$&$g\o z$&$x\o z$&$gx\o z$\\
	\hline 1&$1\o bz$&$g\o bz$&$bx\o bz$&$bgx\o bz$\\
	       g&$g\o bz$&$1\o bz$&$bgx\o bz$&$bx\o bz$\\
	       x&$ax\o bz$&$-agx\o bz$&0&0\\
           gx&$agx\o bz$&$-ax\o bz$&0&0\\
\end{tabular}.
\end{align*}
\item[$(ii)$] the right module structures:
\begin{align*}
\begin{tabular}{c c c c|c}
	\hline $1\o 1_{V}$&$g\o 1_{V}$&$x\o 1_{V}$&$gx\o 1_{V}$&$\c$\\
	\hline $1\o 1_{V}$&$g\o 1_{V}$&$ax\o 1_{V}$&$agx\o 1_{V}$&1\\
	       $g\o 1_{V}$&$1\o 1_{V}$&$-agx\o 1_{V}$&$-ax\o 1_{V}$&g\\
	       $bx\o 1_{V}$&$bgx\o 1_{V}$&0&0&x\\
           $bgx\o 1_{V}$&$bx\o 1_{V}$&0&0&gx\\
    \hline $1\o z$&$g\o z$&$x\o z$&$gx\o z$&$\c$\\
	\hline $1\o az$&$g\o az$&$ax\o az$&$agx\o az$&1\\
	       $g\o -az$&$1\o -az$&$-agx\o -az$&$-ax\o -az$&g\\
	        $bx\o -az$&$bgx\o -az$&0&0&x\\
           $bgx\o az$&$bx\o az$&0&0&gx\\
\end{tabular}.
\end{align*}
\item[$(iii)$] the left comodule structures:
\begin{align*}
&\r^{l}(1\o 1_{V})=1\o 1\o 1_{V},\quad \r^{l}(1\o z)=1\o 1\o dz,
\\&\r^{l}(g\o 1_{V})=g\o g\o 1_{V},\quad \r^{l}(g\o z)=g\o g\o dz,
\\&\r^{l}(x\o 1_{V})=(cx\o g+1\o dx)\o 1_{V},\quad \r^{l}(x\o z)=(cx\o g+1\o dx)\o dz,
\\&\r^{l}(gx\o 1_{V})=(cgx\o 1+g\o dgx)\o 1_{V},,\quad \r^{l}(gx\o z)=(cgx\o 1+g\o dgx)\o dz.
\end{align*}
\item[$(iv)$] the right comodule structures:
\begin{align*}
&\r^{r}(1\o 1_{V})=1\o 1_{V}\o 1,\quad \r^{r}(1\o z)=1\o cz\o g+1\o 1_{V}\o a^{-1}bdx,
\\&\r^{r}(g\o 1_{V})=g\o 1_{V}\o g,\quad \r^{r}(g\o z)=g\o cz\o 1+g\o 1_{V}\o a^{-1}bdgx,
\\&\r^{r}(x\o 1_{V})=cx\o 1_{V}\o g+1\o 1_{V}\o dx,
\\&\r^{r}(x\o z)=cx\o cz\o 1+cx\o 1_{V}\o a^{-1}bdgx+1\o cz\o -dgx,
\\&\r^{r}(gx\o 1_{V})=cgx\o 1_{V}\o 1+g\o 1_{V}\o dgx,
\\&\r^{r}(gx\o z)=cgx\o cz\o g+cgx\o 1_{V}\o a^{-1}bdx+g\o cz\o -dx.
\end{align*}
\end{enumerate}
We define
$\!^{coH}(H_{4}\o V)=\{h\o v|h_{1}\o h_{2}\o \psi_{V}(v)=1\o \psi(h)\o \psi_{V}(v)\}$. That is
\begin{align*}
&\!^{coH}(H_{4}\o V)=\{1\o 1_{V}, 1\o z\}.
\end{align*}
Then it is not hard to check that $\!^{coH}(H_{4}\o V)$ and $V$ is an isomorphism of right-right BiHom-$(m,n,p,q)$-Yetter-Drinfel'd modules.

\end{example}
\begin{corollary}
Let $H$ be a BiHom-Hopf algebra . Then the identity functor is a monoidal equivalence
\begin{align*}
(~\mathfrak{ID},\xi~): (~\!^{H}_{H}\mathfrak{M}\!^{H}_{H},\B_{H})\ra (~\!^{H}_{H}\mathfrak{M}\!^{H}_{H},\o_{H}),
\end{align*}
where the isomorphisms $\xi:M\o_{H} N\ra M\B_{H} N$ satisfy $\xi (m\o n)=\hat{T}_{M}T_{M}(m)_{(0)}\c \hat{T}_{N}T_{N}(n)_{[-1]}\o \hat{T}_{M}T_{M}(m)_{(1)}\c \hat{T}_{N}T_{N}(n)_{[0]}$, for any $m\in M$ and $n\in N$.
\end{corollary}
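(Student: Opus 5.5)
The corollary should come out of Theorem \ref{T1}(3) with almost no new computation. That theorem gives two monoidal equivalences into the same target,
\[
(\mathcal{YD}^{H}_{H}(m,n,p,q),\o)\simeq({}^{H}_{H}\mathfrak{M}^{H}_{H},\o_{H}),\qquad (\mathcal{YD}^{H}_{H}(m,n,p,q),\o)\simeq({}^{H}_{H}\mathfrak{M}^{H}_{H},\B_{H}).
\]
Both are built from the same underlying functor $V\mapsto H\o V$, with quasi-inverse $M\mapsto {}^{coH}M$. Composing one with a quasi-inverse of the other therefore gives a monoidal equivalence between $({}^{H}_{H}\mathfrak{M}^{H}_{H},\B_{H})$ and $({}^{H}_{H}\mathfrak{M}^{H}_{H},\o_{H})$. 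Its underlying functor is naturally isomorphic to $M\mapsto H\o{}^{coH}M\cong M$, i.e.\ to the identity, via the structure theorem isomorphism $H\o{}^{coH}M\to M$, $h\o m\mapsto h\c m$.

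Transporting the monoidal structure along this natural isomorphism, the identity functor becomes monoidal. Its structure map $\xi_{M,N}$ is the composite
\begin{align*}
M\o_{H}N\cong (H\o{}^{coH}M)\o_{H}(H\o{}^{coH}N)\xrightarrow{\ \vp\ } H\o{}^{coH}M\o{}^{coH}N\xrightarrow{\ \vp'^{-1}\ } (H\o{}^{coH}M)\B_{H}(H\o{}^{coH}N)\cong M\B_{H}N,
\end{align*}
where $\vp$ is the map from the proof of Theorem \ref{T1}(1),(3) and $\vp'$ is its dual from part (2). The monoidal coherence of $\xi$ is then inherited from the coherence squares already established for $\vp$ and $\vp'$ in Theorem \ref{T1}. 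Likewise, $\xi$ is a morphism in ${}^{H}_{H}\mathfrak{M}^{H}_{H}$ because every map in the composite is one. Since $\xi$ is an isomorphism, only its inverse direction needs the formula, but we still want a closed form.

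The concrete step is to identify this composite with the stated formula. I would check it on elements $m\o n$ by writing $m=h\c m'$ and $n=k\c n'$ with $m'\in{}^{coH}M$ and $n'\in{}^{coH}N$, and then chasing through $\vp$ and $\vp'^{-1}$. After that I would verify directly that the formula lands in $M\B_{H}N$. Applying $\r^{r}\o\psi_N$ and $\om_M\o\r^{l}$ to $\xi(m\o n)$, the two sides should agree by the right-right compatibility (\ref{e2.4}) on $M$ and the left-left compatibility (\ref{e2.1}) on $N$, together with the bicomodule condition. The defining relation of $M\o_{H}N$ is what makes $\xi$ well defined, and the twist factors $\hat T_MT_M$ should exactly absorb the $T,\hat T$ twists appearing in Propositions \ref{P2.9} and \ref{P1}. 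For the inverse I would write an explicit candidate using $S^{-1}$ in the opposite pattern, and check both composites are the identity by the antipode identities of Proposition 1.1(2).

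I expect the main obstacle to be the bookkeeping of the automorphism twists. Specifically, one has to confirm that the composite really reduces to the displayed formula with $\hat T_MT_M$ on both tensorands, and that $\xi$ intertwines the twisted actions and coactions of Propositions \ref{P2.9} and \ref{P1}. I would first handle the untwisted case $T=\hat T=\mathrm{id}$ to fix the shape of the argument, and then insert the powers of $\a,\b,\om,\psi$ using that all structure maps commute with them.
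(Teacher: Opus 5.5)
Your proposal is correct and follows essentially the same route as the paper's proof. The paper also identifies the identity functor with the composite of ${}^{coH}(-)$ and $H\o(-)$, realizes $\xi$ as $\d^{-1}\circ\vp$ (your $\vp'^{-1}\circ\vp$), and obtains the closed formula by computing on free objects $H\o V$, $H\o W$ (equivalent to your $m=h\c m'$, $n=k\c n'$), with the Yetter--Drinfel'd condition (\ref{e3.1}) making the two expressions match; your extra direct checks are redundant since $\xi$ is already a composite of isomorphisms.

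One bookkeeping correction: in the BiHom setting the structure isomorphism $H\o{}^{coH}M\to M$ must be twisted, e.g.\ $h\o m'\mapsto \a^{-1}(h)\c m'$. The untwisted map $h\o m'\mapsto h\c m'$ sends $k\c(h\o m')$ to $\a(k)\c(h\c m')$, so it is not left $H$-linear.
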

\begin{proof}
The identity functor is isomorphic to the composition
$$
\xymatrix@C=0.5cm{
 (~\!^{H}_{H}\mathfrak{M}^{H}_{H},\B_{H}) \ar[rr]^-{\!^{coH}(-)} &&  (\mathcal{YD}^{H}_{H}(m,n,p,q),\o) \ar[rr]^-{H\o (-)} &&  (~\!^{H}_{H}\mathfrak{M}^{H}_{H},\o_{H})  }
  $$
of two monoidal equivalences. We only need to prove that the induced structure of monoidal functor on the identity has the form one have claimed. It is sufficient to consider the case $M=H\o V$ and $N=H\o W$ with $V, W\in \mathcal{YD}^{H}_{H}(m,n,p,q)$. Then $\xi$ is the composition
$$
\xymatrix@C=0.5cm{
(H\o V)\o_{H} (H\o W)\ar[r]^-{\vp} &  (H\o V\o W) \ar[r]^-{\d^{-1}} & (H\o V)\B_{H} (H\o W)  },
  $$
where $\d^{-1}$ is dual to $\vp$ and is defined by $\d^{-1}(g\o v\o w)=\hat{T}(g_{1})\o \hat{T}_{V}(v_{(0)})\B_{H} \hat{T}\a^{-1}(g_{2}v_{(1)})\o \hat{T}_{W}\psi_{W}(w)$. Thus we have
\begin{align*}
\d^{-1}\vp&(h\o v\o_{H}g\o w)
\\&=\d^{-1}(T(g)T\om^{-1}(h_{1})\o T_{V}(v)\tl  TA^{-1}\a^{2}\om(h_{2})\o T_{W}\b_{W}(w))
\\&=\hat{T}(T(g_{1})T\om^{-1}(h_{11}))\o \hat{T}_{V}((T_{V}(v)\tl  TA^{-1}\a^{2}\om(h_{2}))_{(0)})
\\&\quad\B_{H} \hat{T}\a^{-1}((T(g_{2})T\om^{-1}(h_{12}))(T_{V}(v)\tl  TA^{-1}\a^{2}\om(h_{2}))_{(1)})\o \hat{T}_{W}T_{W}\b_{W}\psi_{W}(w)
\\&=\hat{T}T(g_{1})\hat{T}T(h_{1})\o \hat{T}_{V}((T_{V}(v)\tl  TA^{-1}\a^{2}\om\psi^{-1}(h_{22}))_{(0)})
\\&\quad\B_{H} \hat{T}T(g_{2})\hat{T}\a^{-1}\b^{-1}[T\b\om^{-1}(h_{21})(T_{V}(v)\tl  TA^{-1}\a^{2}\om\psi^{-1}(h_{22}))_{(1)}]\o \hat{T}_{W}T_{W}\b_{W}\psi_{W}(w)
\\&=\hat{T}T(g_{1})\hat{T}T(h_{1})\o \hat{T}_{V}(T_{V}(v_{(0)})\tl TA^{-1}\a^{2}\om(h_{21}))
\\&\quad\B_{H} \hat{T}T(g_{2})\hat{T}\a^{-1}\b^{-1}[T\b(v_{(1)})T\a\psi^{-1}(h_{22})]\o \hat{T}_{W}T_{W}\b_{W}\psi_{W}(w)
\\&=\hat{T}T(g_{1})\hat{T}T\om^{-1}(h_{11})\o \hat{T}_{V}T_{V}(v_{(0)})\tl \hat{T}TA^{-1}\a^{2}\om(h_{12})
\\&\quad\B_{H} (\hat{T}T\a^{-1}(g_{2})\hat{T}T\a^{-1}(v_{(1)}))\hat{T}T(h_{2})\o \hat{T}_{W}T_{W}\b_{W}\psi_{W}(w)
\\&=(\hat{T}T(g_{1})\o \hat{T}_{V}T_{V}(v_{(0)}))\c \hat{T}T(h_{1})
\\&\quad\B_{H} (\hat{T}T\a^{-1}(g_{2})\hat{T}T\a^{-1}(v_{(1)}))\c(\hat{T}T(h_{2})\o \hat{T}_{W}T_{W}\psi_{W}(w))
\\&=(\hat{T}T(g)\o \hat{T}_{V}T_{V}(v))_{(0)}\c (\hat{T}T(h)\o \hat{T}_{W}T_{W}(w))_{[-1]}
\\&\quad\B_{H} (\hat{T}T(g)\o \hat{T}_{V}T_{V}(v))_{(1)}\c(\hat{T}T(h)\o \hat{T}_{W}T_{W}(w))_{[0]}.
\end{align*}

The fourth equality holds by Eq. $(\ref{e3.1})$:
\begin{align*}
&[v\tl h_2]_{(0)}\o A\alpha^{-2}\b\omega^{-2}\psi(h_{1})[v\tl h_2]_{(1)}=v_{(0)}\tl \psi(h_{1})\o\b(v_{(1)})A\alpha^{-1}\omega^{-1}(h_{2}),
\end{align*} 
replacing $v$ by $T_{V}(v)$ and $h$ by $TA^{-1}\a^{2}\om\psi^{-1}(h_{2})$.

The coherence condition for $\xi$ is commutativity of the diagram
$$\xymatrix{
  M\o_{H} N\o_{H} P \ar[d]_{\mathrm{id}\o \xi} \ar[r]^{\xi\o \mathrm{id}}  & (M\B_{H} N)\o_{H} P \ar[d]^{\xi}  \\
  M\o_{H} (N\B_{H} P)  \ar[r]_{\xi}   & M\B_{H} N\B_{H} P.             }
                $$

This completes the proof.
\end{proof}
In what follows, we will construct a braiding structure on the strict monoidal category $(~\!^{H}_{H}\mathfrak{M}^{H}_{H},\o_{H})$. This is an important result in this section. In the same manner, we can construct a braiding structure on the strict monoidal category $(~\!^{H}_{H}\mathfrak{M}^{H}_{H},\B_{H})$.
\begin{theorem}\label{T2}
Let $H$ be a BiHom-Hopf algebra with a bijective antipode $S$. Then $(~\!^{H}_{H}\mathfrak{M}^{H}_{H},\o_{H})$ is a strict braided monoidal category with a braiding
\begin{align*}
\widetilde{\si}:M\o_{H} N&\ra N\o_{H} M
\\m\o n&\m [\a^{-2}\om^{-2}(m_{[-1]1})\c \a_{N}^{-2}\om_{N}^{-1}(n_{(0)})]\c S\b^{-1}\psi^{-2}(n_{(1)1})
\\&\qquad\quad \o S\a^{-1}\om^{-2}(m_{[-1]2})\c[\b_{M}^{-2}\psi_{M}^{-1}(m_{[0]})\c \b^{-2}\psi^{-2}(n_{(1)2})]
\end{align*}
and the inverse
\begin{align*}
\widetilde{\si}^{-1} (n\o m)=[\a^{-2}\psi^{-2}(n_{(1)2})\c &\a^{-2}_{M}\psi^{-1}_{M}(m_{[0]})]\c S^{-1}\b^{-1}\psi^{-2}(m_{[-1]2})
\\&\o S^{-1}\a^{-1}\om^{-2}(n_{(1)1})\c [\b^{-2}_{N}\om^{-1}_{N}(n_{(0)})\c \b^{-2}\om^{-2}(m_{[-1]1})],
\end{align*}
for any $m\in M$ and $n\in N$.
\end{theorem}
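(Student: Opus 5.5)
We plan to transport the braiding of Theorem \ref{T3.5} along the monoidal equivalence of Theorem \ref{T1}(3), rather than verify the hexagon axioms directly in ${}^{H}_{H}\mathfrak{M}^{H}_{H}$. Since $(\mathcal{YD}^{H}_{H}(m,n,p,q),\o)$ is strict braided with braiding $c_{V,W}$, and ${}^{coH}(-)$, $H\o(-)$ are quasi-inverse monoidal equivalences with structure isomorphism $\vp:(H\o V)\o_{H}(H\o W)\to H\o V\o W$, the category $({}^{H}_{H}\mathfrak{M}^{H}_{H},\o_{H})$ inherits a braiding. On objects of the form $H\o V$ it is
$$\widetilde{\si}_{H\o V,H\o W}=\vp^{-1}\circ(H\o c_{V,W})\circ\vp .$$
The braiding axioms and naturality follow formally, because a monoidal equivalence transports braidings. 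The content of the theorem is therefore that this transported braiding is given, on an arbitrary pair $M,N$, by the explicit formula in the statement.

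The steps are as follows.

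First, we check that the displayed map $\widetilde{\si}$ is well defined, i.e.\ it sends $M\o_{H}N$ into $N\o_{H}M$. This uses the $\o_{H}$-balancing relation $m\c h\o\b_{N}(n)=\a_{M}(m)\o h\c n$, the compatibilities (\ref{e2.1})--(\ref{e2.4}), and the antipode identities of Proposition 1.1.

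Second, we verify that $\widetilde{\si}$ is a morphism in ${}^{H}_{H}\mathfrak{M}^{H}_{H}$, i.e.\ it is left/right $H$-linear and left/right $H$-colinear. Each of these is a Sweedler computation in the style of Proposition \ref{P2.9}.

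Third, we prove naturality in $M$ and $N$. This is immediate, since $\widetilde{\si}$ is built only from the structure maps and morphisms preserve them.

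Fourth, by naturality and the fact that every $M$ is isomorphic to $H\o{}^{coH}M$ (Theorem \ref{T1}), it suffices to compare $\widetilde{\si}$ with $\vp^{-1}(H\o c_{V,W})\vp$ for $M=H\o V$, $N=H\o W$. On the generators $g\o v\o_{H}1\o w$, expand $\widetilde{\si}$ using the structures (\ref{e4.5}), (\ref{e4.6}) and Example \ref{E1}(1). Then cancel the pairs $S(h_{1})h_{2}$ via Proposition 1.1, which should leave $\om_{W}^{-1}(w_{(0)})\o\a_{V}^{-1}(v)\tl(\cdots)(w_{(1)})$ up to the twisting in $\vp$. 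This identifies $\widetilde{\si}$ with the transported braiding, so the hexagon relations hold.

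Fifth, for invertibility, we verify directly that the displayed $\widetilde{\si}^{-1}$ composes with $\widetilde{\si}$ to the identity on both sides. This uses $S^{-1}\a^{-2}\b^{2}(a_{2})a_{1}=\v(a)1_{H}$ and its companions from Proposition 1.1(2); alternatively, we match it with $c^{-1}_{V,W}$ under the equivalence.

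The main obstacle is the fourth step, the bookkeeping of the twisting automorphisms. The powers of $\a,\b,\om,\psi$ in $\widetilde{\si}$ must match those produced by $\vp$, namely $T$ and $A=\a^{m}\b^{n}\om^{p}\psi^{q}$, and by $c_{V,W}$. To keep this manageable we would first do the computation with $T$ fixed (e.g.\ $T=\mathrm{id}$) and use the BiHom-coassociativity identity $\om(h_{1})\o h_{21}\o h_{22}=h_{11}\o h_{12}\o\psi(h_{2})$ systematically to normalize every iterated coproduct to a single shape before cancelling antipodes. If the exponents fail to match exactly, we would instead verify the hexagon identities for the explicit $\widetilde{\si}$ directly on $M\o_{H}N\o_{H}P$. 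That direct check is longer but routine, since $\o_{H}$ is strict.
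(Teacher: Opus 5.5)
Your proposal is correct and follows essentially the same route as the paper. The paper also first checks that $\widetilde{\sigma}$ is compatible with the balancing relation of $M\otimes_{H}N$, then takes $M=H\otimes V$, $N=H\otimes W$ and identifies $\widetilde{\sigma}$ with $\varphi^{-1}\circ(\mathrm{id}\otimes c_{V,W})\circ\varphi$ by a Sweedler computation that inserts antipode pairs (rather than cancelling them) on general elements $g\otimes v\otimes_{H}h\otimes w$, not on your $g\otimes v\otimes_{H}1\otimes w$. Your Steps 2 and 5 become redundant once that identification is made, while your Step 3 usefully spells out the naturality argument the paper leaves implicit when it says it suffices to treat objects of the form $H\otimes V$.
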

\begin{proof}
By Theorem \ref{T1}, it remains to check that the braiding $\widetilde{\si}$ induced in $\!^{H}_{H}\mathfrak{M}^{H}_{H}$ via the monoidal equivalent with $\mathcal{YD}^{H}_{H}(m,n,p,q)$ has the stated form.

We first check that the linear map $\widetilde{\si}$ is well defined. For any $h\in H$, $m\in M$ and $n\in N$, we have
\begin{align*}
&\widetilde{\si}(m\c h\o_{H} \b_{N}(n))
\\&=(\a^{-2}\om ^{-2}((m\c h)_{[-1]1})\c \a_{N}^{-2}\b_{N}\om_{N} ^{-1}(n_{(0)}))\c S\psi ^{-2}(n_{(1)1})
\\&\quad\o_{H} S\a^{-1}\om ^{-2}((m\c h)_{[-1]2})\c(\b_{M}^{-2}\psi_{M}^{-1}((m\c h)_{[0]})\c \b^{-1}\psi ^{-2}(n_{(1)2}))
\\&=(\a^{-2}\om ^{-2}(m_{[-1]1} h_{11})\c \a_{N}^{-2}\b_{N}\om_{N} ^{-1}(n_{(0)}))\c S\psi ^{-2}(n_{(1)1})
\\&\quad\o_{H} S\a^{-1}\om ^{-2}(m_{[-1]2} h_{12})\c(\b_{M}^{-2}\psi_{M}^{-1}(m_{[0]}\c h_{2})\c \b^{-1}\psi ^{-2}(n_{(1)2}))
\\&=(\a^{-1}\om ^{-2}(m_{[-1]1} )\c (\a^{-2}\om ^{-2}(h_{11})\c \a_{N}^{-2}\om_{N} ^{-1}(n_{(0)})))\c S\psi ^{-2}(n_{(1)1})
\\&\quad \o_{H} S\a^{-2}\b\om ^{-2}(h_{12})S\b^{-1}\om ^{-2}(m_{[-1]2})\c(\a_{M}\b_{M}^{-2}\psi_{M}^{-1}(m_{[0]})\c \b^{-2}\psi ^{-2}(\psi(h_{2}) n_{(1)2}))
\\&=(\a^{-1}\om ^{-2}(m_{[-1]1} )\c (\a^{-2}\om ^{-2}(h_{11})\c \a_{N}^{-2}\om_{N} ^{-1}(n_{(0)})))\c S\psi ^{-2}(n_{(1)1})
\\&\quad \o_{H} S\a^{-1}\b\om ^{-2}(h_{12})\c [S\b^{-1}\om ^{-2}(m_{[-1]2})\c\b_{M}^{-1}(\a_{M}\b_{M}^{-2}\psi_{M}^{-1}(m_{[0]})\c \b^{-2}\psi ^{-2}(\psi(h_{2}) n_{(1)2}))]
\\&=[\a_{N}^{-1}(\a^{-1}\om ^{-2}(m_{[-1]1} )\c \a_{N}^{-2}\om_{N} ^{-1}(\om^{-1}(h_{11})\c n_{(0)}))\c S\a^{-1}\psi ^{-2}(n_{(1)1})]
\c S\a^{-1}\b\om ^{-2}(h_{12})
\\&\quad \o_{H}  S\om ^{-2}(m_{[-1]2})\c(\a_{M}\b_{M}^{-2}\psi_{M}^{-1}(m_{[0]})\c \b^{-2}\psi ^{-2}(\psi(h_{2}) n_{(1)2}))
\\&=(\a^{-1}\om ^{-2}(m_{[-1]1} )\c \a_{N}^{-2}\om_{N} ^{-1}(\om^{-1}(h_{11})\c n_{(0)}))\c S\a^{-1}\psi ^{-2}(n_{(1)1}) S\a^{-1}\om ^{-2}(h_{12})
\\&\quad \o_{H}  S\om ^{-2}(m_{[-1]2})\c(\a_{M}\b_{M}^{-2}\psi_{M}^{-1}(m_{[0]})\c \b^{-2}\psi ^{-2}(\psi(h_{2}) n_{(1)2}))
\\&=(\a^{-1}\om ^{-2}(m_{[-1]1})\c \a_{N}^{-2}\om_{N} ^{-1}(h_{1}\c n_{(0)}))\c S\b^{-1}\psi ^{-2}(h_{21}n_{(1)1})
\\&\quad\o_{H} S\om ^{-2}(m_{[-1]2})\c(\a_{M}\b_{M}^{-2}\psi_{M}^{-1}(m_{[0]})\c \b^{-2}\psi ^{-2}(h_{22} n_{(1)2}))
\\&=(\a^{-1}\om ^{-2}(m_{[-1]1})\c \a_{N}^{-2}\om_{N} ^{-1}((h\c n)_{(0)}))\c S\b^{-1}\psi ^{-2}((h\c n)_{(1)1})
\\&\quad\o_{H} S\om ^{-2}(m_{[-1]2})\c(\a_{M}\b_{M}^{-2}\psi_{M}^{-1}(m_{[0]})\c \b^{-2}\psi ^{-2}((h\c n)_{(1)2}))
\\&=\widetilde{\si}(\a_{M}(m)\o_{H}h\c n)
\end{align*}

Next, it is sufficient to consider the case of BiHom-four-angle Hopf modules $M=H\o V$ and $N=H\o W$ with $V, W\in \mathcal{YD}^{H}_{H}(m,n,p,q)$. In this case the $\widetilde{\si}$ is defined by the commutative diagram
$$\xymatrix{
  (H\o V)\o_{H}(H\o W) \ar[d]_{\widetilde{\si}} \ar[r]^-{\vp}      &  (H\o V\o W)\ar[d]^{\mathrm{id}\o c_{V,W}}  \\
  (H\o W)\o_{H}(H\o V)  \ar[r]_-{\vp}               & H\o W\o V           }
  $$
where the $c_{V,W}$ denotes the braiding in Theorem \ref{T3.5}. For any $g, h\in H$, $v\in V$ and $w\in W$, we have
\begin{align*}
&\widetilde{\si}(g\o v\o_{H} h\o w)=\vp^{-1}\circ(\mathrm{id}\o c_{V,W})\circ\vp(g\o v\o_{H} h\o w)
\\&=\vp^{-1}\circ(\mathrm{id}\o c_{V,W})(T(g)T\om^{-1}(h_{1})\o T_{V}(v)\tl  TA^{-1}\a^{2}\om(h_{2})\o T_{W}\b_{W}(w))
\\&=\vp^{-1}(T(g)T\om^{-1}(h_{1})\o T_{W}\b_{W}\om_{W}^{-1}(w_{(0)})\o \a_{V}^{-1}(T_{V}(v)\tl  TA^{-1}\a^{2}\om(h_{2}))\tl TA^{-1}\a\b\om(w_{(1)}))
\\&=\a^{-1}(g)\a^{-1}\om^{-1}(h_{1})\o a_{W}^{-1}\b_{W}\om_{W}^{-1}(w_{(0)})\o_{H}1\o  \b_{V}^{-1}(v)\tl  A^{-1}\a\b^{-1}\om(h_{2}) A^{-1}\a\b^{-1}\om(w_{(1)})
\\&=\a^{-1}\om ^{-1} (g_{1})\a^{-1}\om ^{-1}(h_{1}) \o \a_{W}^{-1}\b_{W}\om_{W}^{-1}(w_{(0)})\o_{H}  S\a^{-1}\b^{-1}\om^{-1}\psi ^{-2}(h_{211}w_{(1)11})
\\&\quad[\b^{-1}[S\a^{-2}\om ^{-2}(g_{21})\b^{-2}\psi^{-2}(g_{22})]\a^{-1}\b^{-2}\om^{-1}\psi^{-2}(h_{212}w_{(1)12})]
\\&\quad \o \b_{V}^{-1}(v)\tl A^{-1}\a\b^{-1}\om \psi^{-1} (h_{22}w_{(1)2})
\\&=(\a^{-1}\om ^{-2}(g_{11})\a^{-1}\om ^{-1}(h_{1}) \o \a_{W}^{-1}\b_{W}\om_{W}^{-1} (w_{(0)}))\o_{H}  S\a^{-1}\b^{-1}\psi ^{-2}(h_{21}w_{(1)1}) \c \\&\quad[\b^{-1}[S\a^{-2}\om ^{-2}(g_{12})\b^{-2}\psi^{-1}(g_{2})]\a^{-1}\b^{-2}\om^{-1}\psi^{-2}(h_{221}w_{(1)21})
\\&\quad \o \b_{V}^{-2}(v)\tl A^{-1}\a\b^{-2}\om \psi^{-2}(h_{222}w_{(1)22})]
\\&=(\a^{-2}\om ^{-2}(g_{11})\a^{-2}\om ^{-1}(h_{1}) \o \a_{W}^{-2}\b_{W}\om_{W}^{-1}(w_{(0)}))\c S\a^{-1}\b^{-1}\psi ^{-2}(h_{21}w_{(1)1})
\\&\quad \o_{H} S\a^{-1}\om ^{-2}(g_{12})[\b^{-2}\psi^{-1}(g_{2})\a^{-1}\b^{-2}\om^{-1}\psi^{-2}(h_{221}w_{(1)21})]
\\&\quad\o \b_{V}^{-1}(v)\tl A^{-1}\a\b^{-1}\om \psi^{-2}(h_{222}w_{(1)22})
\\&=(\a^{-2}\om ^{-2}(g_{11})\a^{-2}\om ^{-1}(h_{1}) \o \a_{W}^{-2}\b_{W}\om_{W}^{-1} (w_{(0)}))\c S\a^{-1}\b^{-1}\psi ^{-2}(h_{21}w_{(1)1})
\\&\quad \o_{H} S\a^{-1}\om ^{-2}(g_{12})\c(\b^{-2}\psi^{-1}(g_{2})\a^{-1}\b^{-2}\om^{-1}\psi^{-2}(h_{221}w_{(1)21})
\\&\quad\o \b_{V}^{-2}(v)\tl A^{-1}\a\b^{-2}\om \psi^{-2}(h_{222}w_{(1)22}))
\\&=(\a^{-2}\om ^{-2}(g_{11})\c \a_{N}^{-2}\om_{N} ^{-1}(h_{1}\o w_{(0)}))\c S\a^{-1}\b^{-1}\psi ^{-2}(h_{21}w_{(1)1})
\\&\quad \o_{H} S\a^{-1}\om ^{-2}(g_{12})\c(\b_{M}^{-2}\psi_{M}^{-1}(g_{2}\o \psi_{V}(v))\c \a^{-1}\b^{-2}\psi ^{-2}(h_{22}w_{(1)2}))
\\&=(\a^{-2}\om ^{-2}((g\o v)_{[-1]1})\c \a_{N}^{-2}\om_{N} ^{-1}((h\o w)_{(0)}))\c S\b^{-1}\psi ^{-2}((h\o w)_{(1)1})
\\&\quad \o_{H} S\a^{-1}\om ^{-2}((g\o v)_{[-1]2})\c(\b_{M}^{-2}\psi_{M}^{-1}((g\o v)_{[0]})\c \b^{-2}\psi ^{-2}((h\o w)_{(1)2})).
\end{align*}

This completes the proof.
\end{proof}
As a dual of the Theorem \ref{T2}, define the following linear map, for any $m\B_{H} n\in M\B_{H}N$:
\begin{align*}
\widehat{\si}:M\B_{H} N&\ra N\B_{H} M
\\m\B_{H} n&\m \a^{-2}\om^{-2}(m_{(0)[-1]})S\a^{-2}\om^{-1}(n_{[-1]})\c \b^{-1}_{N}\psi^{-2}_{N}(n_{[0](0)})
\\&\qquad \quad\B_{H} \a^{-1}_{M}\om^{-2}_{M}(m_{(0)[0]})\c S\b^{-2}\psi^{-1}(m_{(1)})\b^{-2}\psi^{-2}(n_{[0](1)}).
\end{align*}
Note that the $\widehat{\si}$ is bijective with inverse 
$\widehat{\si}^{-1}(n\B_{H} m)=\a^{-2}_{M}\psi^{-1}_{M}(m_{[0](0)})\c S^{-1}\b^{-1}\psi^{-2}(m_{[-1]})$ $\a^{-2}\psi^{-2}(n_{(0)[-1]})
\B_{H} \b^{-2}\om^{-2}(m_{[0](1)})S^{-1}\a^{-1}\om^{-2}(n_{(1)})\c \b^{-2}_{N}\om^{-1}_{N}(n_{(0)[0]})$.

\begin{theorem}
Let $H$ be a BiHom-Hopf algebra with a bijective antipode $S$. Then $(~\!^{H}_{H}\mathfrak{M}^{H}_{H},\B_{H})$ is a braided monoidal category with a braiding $\widehat{\si}$.
\end{theorem}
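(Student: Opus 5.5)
The plan mirrors the proof of Theorem \ref{T2}, now using the cotensor half of Theorem \ref{T1}(3) in place of the tensor half. First we check that $\widehat{\si}$ is well defined, i.e.\ that for $m\B_{H} n\in M\B_{H}N$ its image satisfies the defining relation (\ref{e2.5}) of $N\B_{H}M$. For this we apply $\r^{r}\o\psi_{M}$ and $\om_{N}\o\r^{l}$ to the two tensor factors of $\widehat{\si}(m\B_H n)$. We then use three ingredients: the compatibilities (\ref{e2.1})--(\ref{e2.4}) to move coactions through actions; the anti-comultiplicativity $\D(S(a))=S\om\psi^{-1}(a_{2})\o S\om^{-1}\psi(a_{1})$ from the Proposition in Section 1; and the bicomodule condition $\om(m_{[-1]})\o m_{[0](0)}\o m_{[0](1)}=m_{(0)[-1]}\o m_{(0)[0]}\o\psi(m_{(1)})$. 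The defining relation $\r^{r}(m)\o\psi_N(n)=\om_M(m)\o\r^{l}(n)$ lets us trade the middle $H$-leg coming from $m_{(1)}$ against the one coming from $n_{[-1]}$, and the antipode identity $h_1S(h_2)=\v(h)1$ then collapses the spurious factors. This is the dual of the first computation in the proof of Theorem \ref{T2}.

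Second, we verify the bijectivity claim made before the statement. We compose $\widehat{\si}^{-1}\circ\widehat{\si}$ and use the $S^{-1}$ identities of the Proposition in Section 1 (for instance $S^{-1}\om^{2}\psi^{-2}(a_2)a_1=\v(a)1_H$ and its $\a,\b$-twisted companions) to cancel the $S$ and $S^{-1}$ factors. The composition in the other order is handled symmetrically. We also check that $\widehat{\si}$ is a morphism in ${}^{H}_{H}\mathfrak{M}^{H}_{H}$ and is natural in $M$ and $N$; naturality is immediate, since $\widehat{\si}$ is built solely from the structure maps.

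Third, and most economically, we reduce the hexagon axioms to Theorem \ref{T3.5}. By Theorem \ref{T1}(3) the functor ${}^{coH}(-)$ is a monoidal equivalence from $({}^{H}_{H}\mathfrak{M}^{H}_{H},\B_H)$ to $\mathcal{YD}^{H}_{H}(m,n,p,q)$, with quasi-inverse $H\o(-)$. Its monoidal structure is the isomorphism $\d:(H\o V)\B_H(H\o W)\to H\o V\o W$, whose inverse $\d^{-1}(g\o v\o w)=\hat T(g_1)\o\hat T_V(v_{(0)})\B_H\hat T\a^{-1}(g_2v_{(1)})\o\hat T_W\psi_W(w)$ appears in the proof of the Corollary. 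Any braiding transported along a monoidal equivalence is a braiding. Hence it suffices to show that, for $M=H\o V$ and $N=H\o W$ with $V,W\in\mathcal{YD}^{H}_{H}(m,n,p,q)$,
\[
\widehat{\si}=\d^{-1}\circ(\mathrm{id}\o c_{V,W})\circ\d .
\]
Naturality of $\widehat{\si}$ and the fact that every object is isomorphic to one of the form $H\o V$ then give the identity for all $M,N$. To establish it, we expand $\widehat{\si}((g\o v)\B_H(h\o w))$ using the explicit structures (\ref{e4.5}), (\ref{e4.6}) and Example \ref{E1}(1), then expand the right-hand side using $c_{V,W}$, and match the two expressions. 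In the matching, Eq.\ (\ref{e3.1}) is applied with $v$ replaced by $\hat T_V(v)$ and $h$ by a suitable twist of the comultiplication legs, in the same manner as in the proofs of Theorems \ref{T1} and \ref{T2}.

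The main obstacle is this last matching computation. One must track the powers of $\a,\b,\om,\psi$ that arise from the BiHom-coassociativity rearrangements $h_{11}\o h_{12}\o h_{2}\leftrightarrow\om(h_1)\o h_{21}\o\psi^{-1}(h_{22})$ and from the $\hat T$ twists. This bookkeeping is what forces the particular exponents in the formula for $\widehat{\si}$. I would first carry out the calculation in the Hopf case $\a=\b=\om=\psi=\mathrm{id}$, where it reduces to Schauenburg's dual formula, to pin down the shape. I would then reinsert the twists step by step, using $\a^2\om^2=\b^2\psi^2$ whenever a mismatch appears.
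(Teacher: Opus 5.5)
Your proposal follows essentially the same route as the paper, which proves this theorem only by pointing to the proof of Theorem \ref{T2}. That route is to check that $\widehat{\si}$ is well defined (here, that it lands in $N\B_{H}M$), then identify it on $M=H\o V$, $N=H\o W$ with $\d^{-1}\circ(\mathrm{id}\o c_{V,W})\circ\d$ and extend to all objects by naturality, as you do. Your separate bijectivity and linearity checks are redundant once that identification is made. In the well-definedness step the twists are matched by anti-comultiplicativity of $S$, the cotensor relation, BiHom-(co)associativity and $\a^{2}\om^{2}=\b^{2}\psi^{2}$, with no need for $h_1S(h_2)=\v(h)1$; neither point affects the argument.
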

The proof is similar to Theorem \ref{T2}.

\section*{Acknowledgements}The authors are deeply indebted to the referee for the referee’s very  useful suggestions and some improvements to this paper. This work was partially supported by the Scientific Research Foundation of Nanjing Institute of Technology (No. YKJ202219) and the Natural Science Foundation of Jiangsu Higher Education Institutions of China (No. 22KJB110019).

\end{document}